\documentclass[11pt]{article}
\usepackage[top=2.5cm, bottom=2.5cm, left=2.5cm, right=2.5cm]{geometry}
\usepackage{amsthm}
\usepackage{amsmath}
\usepackage{amssymb}
\usepackage{amsfonts}
\usepackage{latexsym}
\usepackage{mathtools}
\usepackage{enumitem}

\usepackage[pdftex]{graphicx} 
\usepackage[pdftex]{xcolor}   
\usepackage{bm}
\usepackage{ytableau}
\usepackage{tikz}
\usepackage{comment}

\newcommand{\N}{\mathbb{N}}

\newcommand{\Z}{\mathbb{Z}}

\newcommand{\C}{\mathbb{C}}

\newcommand{\SSYT}{{\rm SSYT}}
\newcommand{\PSST}{{\rm PSST}}
\newcommand{\QSST}{{\rm QSST}}
\newcommand{\Sym}{{\rm Sym}}
\newcommand{\Rect}{{\rm Rect}}
\newcommand{\shape}{{\rm shape}}

\newtheorem{thms}{Theorem}[section]
\newtheorem*{thms*}{Theorem}
\newtheorem{lem}[thms]{Lemma}
\newtheorem{cor}[thms]{Corollary}
\newtheorem{prop}[thms]{Proposition}

\newtheorem*{fact*}{Fact}

\theoremstyle{definition}
\newtheorem{dfn}[thms]{Definition}
\newtheorem{ex}[thms]{Example}
\newtheorem*{ex*}{Example}
\newtheorem{rmk}[thms]{Remark}

\author{Hikari Hanaki}
\date{}

\begin{document}

\title{The Littlewood-Richardson rule for Schur $P$-, $Q$-multiple zeta functions}

\maketitle

\begin{abstract}
The Schur $P$-, $Q$-multiple zeta functions were defined by Nakasuji and Takeda inspired by the tableau representation of Schur $P$-, $Q$-functions. 
While a product of two Schur $P$-functions expands as a linear combination of Schur $P$-functions, we obtain a similar expansion formula for the Schur $P$-multiple zeta functions by taking summation over the symmetric group permutating all the variables. We also introduce a expansion formula of skew Schur $Q$-multiple zeta functions by taking summation over the symmetric group. Furthermore, this skew type formula can be refined by restricting the symmetric group to its specific subgroup. 
\end{abstract}

\section{Introduction}

The {\it Schur $P$-, $Q$-multiple zeta functions} were introduced by Nakasuji-Takeda \cite{NT}. 
These are defined as sums over combinatorial objects called semi-standard marked shifted tableau, similar to usual Schur $P$-, $Q$-functions. 
We now review the detailed definitions. 

A {\it partition} is a finite sequence $\lambda=(\lambda_1, \dots, \lambda_r)$ of non-negative integers in weakly decreasing order: $\lambda_1 \ge \dots \ge \lambda_r$. The number of non-zero $\lambda_i$ in $\lambda=(\lambda_1, \dots, \lambda_r)$ is the {\it length} of $\lambda$, denoted by $l(\lambda)$, and the sum of $\lambda_i$ is the {\it weight} of $\lambda$, denoted by $|\lambda|$. If $|\lambda|=n$, we say that $\lambda$ is a partition of $n$. 
A partition $\lambda=(\lambda_1,\ldots,\lambda_r)$ is called {\it strict}, if $\lambda_1 > \cdots > \lambda_r \ge 0$. 
Throughout this paper, let $\lambda, \mu, \nu$ be strict partitions. 
For $\lambda = (\lambda_1, \dots, \lambda_r)$, let $SD(\lambda)$ be the subset of $\Z^2$:
$$SD(\lambda)=\{(i,j)\in\Z^2|1\le i\le r, i \le j\le \lambda_i + i - 1\}$$
and depicted as a collection of square boxes arranged in the $i$-th row having $\lambda_i$ boxes and indented $i-1$ spaces. We call this the {\it shifted diagram} of shape $\lambda$. 
We identify $\lambda$ with this collection of boxes, and we call it {\it shifted shape} $\lambda$ and denote it by the same symbol $\lambda$. 
We say that $(i,j)\in SD(\lambda)$ is a {\it corner} of $\lambda$ if $(i+1,j)\notin SD(\lambda)$ and $(i,j+1)\notin SD(\lambda)$ and denote by $SC(\lambda)\subset SD(\lambda)$ the set of all corners of $\lambda$.

A {\it skew strict partition} is a pair of strict partitions $\lambda/\mu$ such that $\mu_i \le \lambda_i$ for all $i$. The shifted diagram of the skew strict partition $\lambda/\mu$ denoted by $SD(\lambda/\mu)$, is defined by $SD(\lambda/\mu) = SD(\lambda) \setminus SD(\mu)$ and is depicted as the diagram of removing that of $\mu$ from that of $\lambda$. For clarity, we depict the diagram of $\lambda/\mu$ by coloring the boxes of $\mu$ in black as in Figure \ref{skew_fig}. 
When $\mu = \emptyset$ (i.e., $\mu_i = 0$ for all $i$), we identify the skew strict partition $\lambda/\mu$ with $\lambda$ and call it {\it the normal strict partition}. Unless otherwise stated, partitions are assumed to be non-empty.  The corners of $\lambda/\mu$ are defined in the same way as in the case of normal partition. 

\begin{figure}[h]
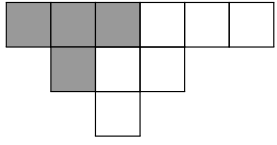

\center
\begin{ytableau}
*(black!40){} & *(black!40){} & *(black!40){} & {} & {} & {}\\
\none & *(black!40){} & {} & {} \\
\none & \none & {}
\end{ytableau}
\caption{Example of diagram of skew strict partition: $\lambda/\mu = (6, 3, 1)/(3, 1)$}
\label{skew_fig}
\end{figure}

Let $X$ be a set. A {\it shifted tableau} $T=(t_{ij})$ of shifted shape $\lambda$ over $X$ is a filling of $SD(\lambda)$ obtained by putting $t_{ij}\in X$ into the $(i,j)$ box of $SD(\lambda)$. We denote by $ST(\lambda, X)$ the set of all shifted tableaux of shape $\lambda$ over $X$. 
A {\it skew shifted tableau} is defined analogously.
We denote the shape of a shifted tableau $T$ by $\shape(T)$.

Let $\mathbb N'$ be the set $\{1',1,2',2,\ldots\}$ with the total ordering $1' < 1 < 2' < 2 <\cdots $. The symbols $1', 2', \dots$ are said to be {\it marked}, and we denote $|k| = |k'| = k$ for each $k \in \N$. For a normal or skew shifted tableau $T = (t_{ij})$ over $\N$, we denote the tableau $(|t_{ij}|)$ by $|T|$. Similarly, for a word $w$ over $\N$, we denote the word obtained by removing all marks from $w$ by $|w|$. The {\it content} of a tableau $T$ over $\N'$ is the sequence $(n_1, \dots, n_r)$, where $n_k$ is the number of times the entries $k'$ and $k$ appear in $T$. 

Let $\delta = \lambda/\mu$ (now, including $\mu = \emptyset$). 
Then, a {\it semi-standard marked shifted tableau} $M=(m_{ij})\in ST(\delta,\mathbb N')$ is a shifted tableau satisfying the following conditions. 
\begin{description}
    \item[PST1] The entries of $M$ are weakly increasing along each column and row of $M$. 
    \item[PST2] For each $k=1,2,\ldots$, there is at most one $k'$ per row.
    \item[PST3] For each $k=1,2,\ldots$, there is at most one $k$ per column.
    \item[PST4] There is no $k'$ on the main diagonal.
\end{description}
We denote by $\PSST(\delta)$ the set of semi-standard marked shifted tableaux of shape $\delta$.
Similarly, we denote by $\QSST(\delta)$ the set of the shifted tableaux of shape $\delta$ over $\N'$ with PST1, PST2 and PST3.
For short, we denote the union of $\PSST(\delta)$ for all normal or all skew shifted shapes $\delta$ by $\PSST$ and denote that of $\QSST(\delta)$ for all normal or all skew shifted shapes $\delta$ by $\QSST$.

For a given shifted tableau $\bm{v}=(v_{ij})\in ST(\delta, \C)$ of variables, 
the {\it Schur $P$-multiple zeta function} and the {\it Schur $Q$-multiple zeta function} of shape $\delta$ are defined as
$$\zeta_\delta^P({\bm v})=\sum_{M\in \PSST(\delta)}\frac{1}{M^{\bm v}},$$
and
$$\zeta_\delta^Q({\bm v})=\sum_{M\in \QSST(\delta)}\frac{1}{M^{\bm v}},$$
respectively, where $M^{\bm{v}}=\displaystyle{\prod_{(i, j)\in SD(\delta)}|m_{ij}|^{v_{ij}}}$ for $M=(m_{ij})\in \QSST(\delta)$.
Both functions $\zeta_\delta^P(\bm{v})$ and $\zeta_\delta^Q(\bm{v})$ converge absolutely in 
$$
 W_{\delta}^Q
=
\left\{\bm{v}=(v_{ij})\in ST(\delta,\mathbb{C})\,\left|\,
\begin{array}{l}
 \text{$\Re(v_{ij})\ge 1$ for all $(i,j)\in SD(\delta) \setminus SC(\delta)$}, \\[3pt]
 \text{$\Re(v_{ij})>1$ for all $(i,j)\in SC(\delta)$}
\end{array}
\right.
\right\}.
$$

{\it Schur P-, Q-functions} are symmetric functions expressed in terms of shifted tableau as follows: for a normal or skew strict partition $\delta$,
$$P_\delta = \sum_{(m_{ij}) \in \PSST(\delta)} \prod_{(i,j) \in SD(\delta)} x_{m_{ij}},$$
and
$$Q_\delta = \sum_{(m_{ij}) \in \QSST(\delta)} \prod_{(i,j) \in SD(\delta)} x_{m_{ij}}.$$
Since the Schur $P$-, $Q$-multiple zeta functions have a structure similar to that of the Schur $P$-, $Q$-functions, they are expected to have similar properties to those for Schur $P$-, $Q$-functions. 
It is known that Schur $P$-functions have the expansion
\begin{align}\label{LR1}
P_\mu P_\nu=\sum_{\lambda: \rm{strict\ partition}}f_{\mu\nu}^\lambda P_\lambda .
\end{align}
It is known that the coefficients $f_{\mu\nu}^\lambda$ satisfying the equality (\ref{LR1}) are unique for $\mu, \nu, \lambda$, and then $f_{\mu\nu}^\lambda$ are defined by this expansion and sometimes called {\it shifted Littlewood-Richardson coefficients} (cf. \cite{Ser}). 
It also holds that 
\begin{align}\label{LR2}
Q_{\lambda/\mu}=\sum_{\nu:\text{strict\ partition}}f_{\mu\nu}^\lambda Q_\nu 
\end{align}
for Schur $Q$-functions.
There are some combinatorial rules for computing the shifted Littlewood-Richardson coefficients. 
Theorem \ref{LRrule2} is one of the rules, and it implies that $f_{\mu\nu}^\lambda \in \Z_{\ge 0}$.   
Therefore, for $\lambda, \mu, \nu$ such that $f_{\mu\nu}^\lambda > 0$, we have that $|\mu| + |\nu| = |\lambda|$ by comparing the degree of the two sides of equality (1). 
We define the set $\mathcal{G}^P(\mu, \nu)$ as the set of all strict partitions $\lambda$ such that $f_{\mu\nu}^\lambda > 0$, and define the set $\mathcal{G}^Q(\lambda/\mu)$ as the set of all partitions $\nu$ such that $f_{\mu\nu}^\lambda > 0$. 

In this paper, we introduce analogous equality to (\ref{LR1}) for Schur $P$-multiple zeta functions and analogous equality to (\ref{LR2}) for Schur $Q$-multiple zeta functions.

\begin{thms}\label{main}

Let $\mu, \nu$ be strict partitions. 
Let $\bm{s}=(s_{ij})\in ST(\mu, \mathbb{C}), \bm{t}=(t_{ij})\in ST(\nu,\mathbb{C})$ be variables. 
Assume that the real parts of all variables $\{s_{ij}\}, \{t_{ij}\}$ are greater than 1.
For $\lambda \in \mathcal{G}^P(\mu,\nu)$, let $\bm{u}_{\lambda}(\bm{s}, \bm{t}) \in ST(\lambda, \C)$ be a tableau of shifted shape $\lambda$ whose entries are the variables from the set $\{s_{ij}\} \cup \{t_{ij}\}$, with each variable appearing exactly once.
Then the following equality holds for any such $\bm{u}_{\lambda}(\bm{s}, \bm{t})$:
\begin{align*}
\sum_{\Sym(\bm{s}, \bm{t})}\zeta_{\mu}^P(\bm{s})\zeta_{\nu}^P(\bm{t})
=\sum_{\Sym(\bm{s}, \bm{t})}\sum_{\lambda\in\mathcal{G}^P(\mu,\nu)}
f_{\mu\nu}^{\lambda}\zeta_{\lambda}^P(\bm{u}_{\lambda}(\bm{s},\bm{t}))
\end{align*}
where the $\sum_{\Sym(\bm{s}, \bm{t})}$ is the summation over the symmetric group permuting all the variables in $\{s_{ij}\} \cup \{t_{ij}\}$. 
\end{thms}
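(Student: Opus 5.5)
Write $n=|\mu|+|\nu|=|\lambda|$ for every $\lambda\in\mathcal{G}^P(\mu,\nu)$, and let $z_1,\dots,z_n$ be the variables $\{s_{ij}\}\cup\{t_{ij}\}$ in some fixed order. The plan is to compare both sides of the identity term by term, each term being indexed by a multiset of positive integers placed in the boxes, and to reduce the comparison to the symmetric function identity (\ref{LR1}).

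\textbf{Step 1: rewrite each side as a symmetrized sum.} For a tableau $M\in\PSST(\lambda)$, read the entries $|m_{ij}|$ in the boxes that carry the variables $z_1,\dots,z_n$ under the assignment $\bm{u}_\lambda$. This gives a vector $(a_1,\dots,a_n)\in\N^n$ with $M^{\bm u_\lambda}=\prod_k a_k^{z_k}$. Summing over $\Sym(\bm s,\bm t)$ replaces $\prod_k a_k^{-z_k}$ by $\sum_{\sigma\in S_n}\prod_k a_{\sigma(k)}^{-z_k}$, and this sum depends only on the multiset $\{a_1,\dots,a_n\}$. Hence
\[
\sum_{\Sym}\zeta^P_\lambda(\bm u_\lambda)=\sum_{\alpha}c_\lambda(\alpha)\,F_\alpha(\bm z),
\]
where $\alpha=(\alpha_1,\alpha_2,\dots)$ runs over weak compositions of $n$ (that is, contents), $c_\lambda(\alpha)$ is the number of $M\in\PSST(\lambda)$ with content $\alpha$, and $F_\alpha(\bm z)=\sum_{\sigma}\prod_k a_{\sigma(k)}^{-z_k}$ for any vector $a$ with multiset determined by $\alpha$. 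By the same argument, the left side equals $\sum_\alpha d(\alpha)F_\alpha(\bm z)$, where $d(\alpha)$ counts pairs $(M_1,M_2)\in\PSST(\mu)\times\PSST(\nu)$ of total content $\alpha$.

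\textbf{Step 2: compare coefficients.} The number $c_\lambda(\alpha)$ is exactly the coefficient of $x^\alpha$ in $P_\lambda$, and $d(\alpha)$ is the coefficient of $x^\alpha$ in $P_\mu P_\nu$. Extracting the coefficient of $x^\alpha$ from (\ref{LR1}) gives $d(\alpha)=\sum_\lambda f^\lambda_{\mu\nu}c_\lambda(\alpha)$ for every $\alpha$. Both sides are therefore the same formal combination of the $F_\alpha$. This holds for any choice of $\bm u_\lambda$, since Step 1 never used the specific placement of the variables.

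\textbf{Step 3: justify the rearrangements (the main obstacle).} Regrouping infinite sums by content requires absolute convergence. Since all $\Re z_k>1$, the series $\sum_{a\in\N^n}\prod_k a_k^{-\Re z_k}$ converges. Each term $F_\alpha$ corresponds to at most $n!$ vectors $a$. The number of tableaux with a given content and shape is finite and bounded by the coefficients of $P_\lambda$, and $\sum_\alpha c_\lambda(\alpha)|F_\alpha|$ is dominated by a constant times $\sum_{a}\prod_k a_k^{-\Re z_k}$. To see this, note that for a fixed multiset of values the number of tableaux is at most some bound depending only on $n$, for instance $2^n$ choices of marks times $n!$ arrangements. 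The product side is handled the same way. Absolute convergence then permits the reordering, and the theorem follows.
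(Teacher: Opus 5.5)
Your proposal is correct and follows essentially the same route as the paper. Both group the terms by content, observe that each symmetrized term depends only on the multiset of entries, and compare monomial coefficients in (\ref{LR1}) to obtain $d(\alpha)=\sum_\lambda f^\lambda_{\mu\nu}c_\lambda(\alpha)$; your intrinsic $F_\alpha$ and your Step 3 make explicit the choice of representative (the paper's fixed $L_0$) and the absolute-convergence regrouping that the paper leaves implicit.
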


For the expansion of skew Schur $Q$-multiple zeta function $\zeta_{\lambda/\mu}^Q(\bm{v})$, we obtain an analogue of (\ref{LR2}) similarly by symmetrizing (i.e., taking the summation over the symmetric group on all the variables $v_{ij}$ in $\bm{v}$). We denote this symmetric group $\Sym(\bm{v})$. 
Furthermore, the analogue of (\ref{LR2}) can be refined by replacing the symmetric group $\Sym(\bm{v})$ with its subgroup $\Sym(SB(\bm{v}))$.

\begin{thms}\label{skew}
Let $\lambda/\mu$ be a skew strict partition. Let $\bm{v} = (v_{ij}) \in ST(\lambda/\mu, \C)$ be variables. 
Assume that the real parts of the variables $v_{1j}$ with ${\rm min}\{\lambda_2, \sum_{i \ge 2}(\lambda_i - \mu_i)\} + \mu_1 + 1 \le j \le \lambda_1 - 1$  
are greater than or equal to 1 and the real parts of other variables are greater than 1.
Then the following equality holds for any $(\bm{u}_\nu(\bm{v}) \in U_{\nu}^Q(\bm{v}))_{\nu \in \mathcal{G}^Q(\lambda/\mu)}$:
\begin{align*}
\sum_{\Sym(SB(\bm{v}))} \zeta_{\lambda/\mu}^Q(\bm{v})
=\sum_{\Sym(SB(\bm{v}))} \sum_{\nu \in \mathcal{G}^Q(\lambda/\mu)} 
f_{\mu\nu}^{\lambda}\zeta_{\nu}^Q(\bm{u}_{\nu}(\bm{v}))
\end{align*}
where the $\sum_{\Sym(SB(\bm{v}))}$ is the summation over the symmetric group permuting the variables $\{v_{ij}\}$ in $SB(\bm{v})$. 
\end{thms}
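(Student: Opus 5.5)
We prove Theorem \ref{skew} through a content-preserving bijection between skew tableaux and pairs of normal tableaux, symmetrized over $\Sym(SB(\bm{v}))$. We write $\bm{v}$ as a multiset of variables. Once the summation over all variables in $SB(\bm{v})$ is taken, each side depends only on which multiset of absolute values $|m_{ij}|$ is paired with which multiset of variables. The plan is to expand both sides as sums over tableaux and then match terms by content.

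\textbf{Step 1: reduction to a statement about contents.} Fix a tableau $M$ and write $\sum_{\sigma} 1/M^{\sigma\bm{v}}$. This sum depends only on the content of $M$ restricted to the boxes of $SB$, together with the entries of $M$ lying outside $SB$, which are paired with fixed variables. So it suffices to find a bijection
\[
\QSST(\lambda/\mu)\;\longleftrightarrow\;\bigsqcup_{\nu}\QSST(\nu)\times\{\text{LR-type witnesses, } f^\lambda_{\mu\nu}\text{ of them}\}
\]
with three properties: it preserves content on the symmetrized boxes, it sends the boxes outside $SB(\bm{v})$ to the positions prescribed by $U^Q_\nu(\bm{v})$, and it keeps their entries. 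For this I would use Sagan--Worley shifted jeu de taquin rectification $\Rect$, which preserves content. The fiber of $\Rect$ over a fixed $N\in\QSST(\nu)$ has cardinality $f^\lambda_{\mu\nu}$; this is the content of Theorem \ref{LRrule2}, and it is also the combinatorial proof of (\ref{LR2}). The marked/unmarked ambiguity on the diagonal (QSST versus PSST) needs care: I would pass through $P$-tableaux with the factor $2^{\ell}$ and check that the factors match.

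\textbf{Step 2: summation and identification.} Summing over the fibers gives the identity, once the pairing of entries to variables is controlled. Inside $SB(\bm{v})$, the pairing is free because of the symmetrization. Outside $SB(\bm{v})$, the variables are fixed, so their entries must be carried to the designated boxes of $\bm{u}_\nu(\bm{v})$. This is exactly what the definition of $U^Q_\nu(\bm{v})$ must encode: presumably the cells of the tail of the first row, beyond the column $\min\{\lambda_2,\sum_{i\ge2}(\lambda_i-\mu_i)\}+\mu_1$, which jeu de taquin slides rigidly to the end of the first row of $\nu$. I would therefore prove a lemma: under rectification, these first-row cells to the right of every other cell of $\lambda/\mu$ keep their entries and relative order, and they end at the right end of row $1$ of $\nu$. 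Such a cell has nothing below it and nothing to its right except other cells of the same kind. Each slide either shifts it left or leaves it in place, and it can never move down, because any box vacated below it in row 2 would have to lie in columns at most $\lambda_2$.

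\textbf{Step 3: convergence.} Convergence justifies rearranging the sums. Every permuted variable has real part greater than $1$, and the tail variables with real part at least $1$ sit in non-corner positions of both shapes. Hence all series converge absolutely in $W^Q$ for both $\lambda/\mu$ and each $\nu$.

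\textbf{Main obstacle.} The hardest step is the lemma in Step 2: showing that the frozen first-row tail is carried intact to the corresponding positions in every $\nu\in\mathcal{G}^Q(\lambda/\mu)$, including the bound $\min\{\lambda_2,\sum_{i\ge 2}(\lambda_i-\mu_i)\}$. The second term in the minimum requires arguing that entries coming from the lower rows can fill at most that many first-row positions before the tail. I would try to establish this first by induction on the number of inner corner slides.
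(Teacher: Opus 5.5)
Your overall reduction is sound, and in one respect leaner than the paper's. After symmetrizing over $\Sym(SB(\bm{v}))$, each term depends only on the multiset of values $|l_{ij}|$ in the body together with the individual values in the arm. So it suffices to know two things: that $\Rect$ preserves content (true, including the diagonal exceptional slide, which only changes a mark), and that the arm values reappear, in order, at the right end of the first row of $\Rect(L)$. Theorem \ref{LRrule2} then handles the fibres, and your convergence check is fine. The paper instead labels entries via $\phi_T$, tracks boxes through $\rho_L$, and needs Lemma \ref{lem_phi} and Proposition \ref{rect_phi} to get $1/L^{\bm v}=1/|\Rect(L)|^{\bm v_L}$. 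You can bypass this if you read $U^Q_\nu(\bm v)$ as in Remark \ref{rmk_U}. Drop the detour through $P$-tableaux: Theorem \ref{LRrule2} is already a statement about $\QSST$, and the powers $2^{\ell(\lambda)-\ell(\mu)}$ and $2^{\ell(\nu)}$ would not match anyway.

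\textbf{The gap is the arm lemma.} All the content sits there, and the argument you sketch for it is not valid. Entries never move down in jeu de taquin, so ``it can never move down'' proves nothing. The real danger is different: an arm entry, once shifted left by earlier slides, may sit above a box of row $2$, and a later slide can carry the dot past it and then down, pulling a lower entry into row $1$ to its right. The property you invoke (nothing below, only similar boxes to the right) does not prevent this. Take $\lambda/\mu=(6,3)/(3)$ with first row $1,2,9$ in columns $4,5,6$ and second row $5,6,7$. The box $(1,5)$ has nothing below it and only $(1,6)$ to its right. Yet rectification gives first row $1,2,6,7,9$ and second row $5$, so the entry $2$ ends in box $(1,2)$ with $6,7$ from row $2$ to its right. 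Here $m+\mu_1+1=7>\lambda_1$, so the true arm is empty, which is consistent with the lemma. Thus the bound $m=\min\{\lambda_2,\sum_{i\ge2}(\lambda_i-\mu_i)\}$ must genuinely enter the proof, and you have not supplied that step.

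The paper closes the gap by fixing the slide order, which is legitimate since $\Rect(L)=P(w_{row}(L))$.
\begin{enumerate}
\item First rectify everything below row $1$. This leaves row $1$ untouched and produces a second row of length $\psi_2\le m$.
\item Afterwards rows $\ge 2$ only shrink, so a lower entry can enter row $1$ only in a column $\le\psi_2+1$.
\item A count shows the last $\lambda_1-\mu_1-m$ original first-row entries fill the last boxes of row $1$. This separates the cases $\nu_1>\lambda_1-\mu_1$ and $\nu_1=\lambda_1-\mu_1$.
\end{enumerate}
Your induction over arbitrary slides can also work, but only with a quantitative invariant. Let $E$ be the number of entries already lifted into row $1$, all to the left of the arm. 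Then the first arm entry always sits in column $\mu^{\rm cur}_1+1+m+E$. Meanwhile, a slide starting at $(1,\mu^{\rm cur}_1)$ can step down only in a column $\le\min\{\lambda_2+1,\ \mu^{\rm cur}_1+\sum_{i\ge2}(\lambda_i-\mu_i)-E\}$, which is strictly smaller.
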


Remark \ref{rmk_body} and Remark \ref{rmk_U} provide initial explanation of set $SB(\bm{v})$ and $U_\nu^Q(\bm{v})$; the formal definition, however, will be provided in \S \ref{prep_subs} (Definition \ref{arm_body_def}) and \S \ref{proof_skew_subs} respectively.

\begin{rmk}\label{rmk_body}
For $\bm{v} \in \QSST(\lambda/\mu)$, the tableau $SB(\bm{v})$ called the {\it shifted body} of $\bm{v}$ is obtained by removing the {\it shifted arm} of $\bm{v}$ (denoted by $SA(\bm{v})$), which is a part of $\bm{v}$ consisting of a certain number of contiguous boxes with elements from the right-most box in the first row to left.
For example, for $\lambda = (8, 3, 1)$, $\mu = (3, 1)$ and $\bm{v} = (v_{ij}) \in ST(\lambda/\mu, \C)$, the part of light gray boxes in the following figure is $SB(\bm{v})$, and the part of white boxes is $SA(\bm{v})$.
$$
\vcenter{\hbox{
\begin{tikzpicture}[baseline=(current bounding box.center)]
\node at (0,0){ 
~\begin{ytableau}
*(black!40){} & *(black!40){} & *(black!40){} & *(black!10)v_{14} & *(black!10)v_{15} & *(black!10)v_{16} & v_{17} & v_{18}\\
\none & *(black!40){} & *(black!10)v_{23} & *(black!10)v_{24} \\
\none & \none & *(black!10)v_{33}
\end{ytableau}~};
\draw[line width=0.7pt] (1.5, 0.2) -- (2.5, -0.4) node[at={(3.3, -0.4)}]{$SA(\bm{v})$};
\draw[line width=0.7pt] (0.2, -0.2) -- (0.8, -0.8) node[at={(1.4, -0.8)}]{$SB(\bm{v})$};
\end{tikzpicture}}}
$$
\end{rmk}

\begin{rmk}\label{rmk_U}
The set $U_\nu^Q(\bm{v})$ for $\nu \in \mathcal{G}^Q(\lambda/\mu)$ consists of shifted tableaux of shape $\nu$ satisfying the following conditions.
\begin{itemize}
\item The entries are the variables from the set $\{v_{ij}\}$, with each variables appearing exactly once.
\item Having $SA(\bm{v})$ in the right-most part of first row.
\end{itemize}

For example, for $\lambda/\mu = (8, 3, 1)/(3, 1)$, $\nu = (6, 2)$, and 
$$
\bm{v} = 
\vcenter{\hbox{
\begin{tikzpicture}[baseline=(current bounding box.center)]
\node at (0,0){ 
~\begin{ytableau}
*(black!40){} & *(black!40){} & *(black!40){} & *(black!10)v_{14} & *(black!10)v_{15} & *(black!10)v_{16} & v_{17} & v_{18}\\
\none & *(black!40){} & *(black!10)v_{23} & *(black!10)v_{24} \\
\none & \none & *(black!10)v_{33}
\end{ytableau}~};
\draw[line width=0.7pt] (1.5, 0.2) -- (2.5, -0.4) node[at={(3.3, -0.4)}]{$SA(\bm{v})$};
\draw[line width=0.7pt] (0.2, -0.2) -- (0.8, -0.8) node[at={(1.4, -0.8)}]{$SB(\bm{v})$};
\end{tikzpicture}}}, $$
the following tableau is an example of $\bm{u}_\nu(\bm{v}) \in U_\nu^Q(\bm{v})$:
$$
\bm{u}_\nu(\bm{v}) = 
\vcenter{\hbox{
\begin{tikzpicture}[baseline=(current bounding box.center)]
\node at (0,0){ 
~\begin{ytableau}
*(black!10)v_{14} & *(black!10)v_{15} & *(black!10)v_{16} & *(black!10)v_{23} & v_{17} & v_{18}\\
\none & *(black!10)v_{22} & *(black!10)v_{33}
\end{ytableau}~};
\draw[line width=0.7pt] (1.5, -0.1) -- (2.5, -0.4) node[at={(3.2, -0.4)}]{$SA(\bm{v})$};
\end{tikzpicture}}}.
$$

\end{rmk}

This paper is organized as follows. In \S2, we give a proof of Theorem \ref{main}.
In \S3, we review Knuth equivalence and jeu de taquin, which are the combinatorial background of semi-standard shifted tableau and give a proof of Theorem \ref{skew}.

\newpage

\section{Proof of Theorem \ref{main}}
In this section, we provide a proof of Theorem \ref{main}.

\begin{proof}
For a shifted tableau $M$ over $\N'$, let $\bm{n}(M) = (n_k(M))_{k \in \N}$ be the sequence over $\N$ such that $(n_1(M), \dots, n_r(M))$ is the content of $M$ for all sufficiently large $r \in \N$.
For a sequence $\bm{n} = (n_k)_{k \in \N}$ over $\N$, let $\mathcal{M}_{\mu\nu}^P(\bm{n})$ be the set of $(M_0, M_1) \in \PSST(\mu) \times \PSST(\nu)$ such that $n_k(M_0) + n_k(M_1) = n_k$ for all $k \in \N$. Moreover, for $\bm{n}$, let $\mathcal{M}_\lambda^P(\bm{n})$ be the set of $L \in \PSST$ such that $\bm{n}(L) = \bm{n}$.

We now consider the monomial expansion of the product $P_\mu P_\nu$:
\begin{align*}
P_\mu P_\nu
 &= \sum_{\bm{n} = (n_k)_{k \in \N}} \sum_{(M_0, M_1) \in \mathcal{M}_{\mu\nu}^P(\bm{n})} \prod_{k \in \N} x_k^{n_k(M_0)} x_k^{n_k(M_1)}\\
 &= \sum_{\bm{n} = (n_k)_{k \in \N}} \sum_{(M_0, M_1) \in \mathcal{M}_{\mu\nu}^P(\bm{n})} \prod_{k \in \N}x_k^{n_k} \\
 &= \sum_{\bm{n} = (n_k)_{k \in \N}} \#\mathcal{M}_{\mu\nu}^P(\bm{n}) \prod_{k \in \N}  x_k^{n_k} 
\end{align*}
where $\bm{n}$ runs over all sequences over $\N$.
By the expansion (\ref{LR1}), we also obtain the following expansion:
\begin{align*}
P_\mu P_\nu
& = \sum_{\lambda \in \mathcal{G}^P(\mu, \nu)} f_{\mu\nu}^\lambda P_{\lambda}\\
& = \sum _{\bm{n} = (n_k)_{k \in \N}} \sum_{\lambda \in \mathcal{G}^P(\mu, \nu)} f_{\mu\nu}^\lambda \sum_{L \in \mathcal{M}_\lambda^P(\bm{n})} \prod_{k \in \N} x_k^{n_k(L)} \\
& = \sum_{\bm{n} = (n_k)_{k \in \N}} \sum_{\lambda \in \mathcal{G}^P(\mu, \nu)} f_{\mu\nu}^\lambda \sum_{L \in \mathcal{M}_\lambda^P(\bm{n})} \prod_{k \in \N} x_k^{n_k}\\
& = \sum_{\bm{n} = (n_k)_{k \in \N}} \sum_{\lambda \in \mathcal{G}^P(\mu, \nu)} f_{\mu\nu}^\lambda \#\mathcal{M}_\lambda^P(\bm{n}) \prod_{k \in \N} x_k^{n_k}
\end{align*}
where $\bm{n}$ runs over all sequences over $\N$.

For each $\bm{n}$, by comparing the coefficients of the monomial $\displaystyle{\prod_{k \in \N}x_k^{n_k}}$ of the above calculations, we have 
$$\#\mathcal{M}_{\mu\nu}^P (\bm{n}) = \sum_{\lambda \in \mathcal{G}^P(\mu, \nu)} f_{\mu\nu}^\lambda \#\mathcal{M}_\lambda^P(\bm{n}).$$

For $\lambda \in \mathcal{G}^P(\mu,\nu)$, let $\bm{u}_{\lambda}(\bm{s}, \bm{t}) \in ST(\lambda, \C)$ be a tableau of shifted shape $\lambda$ whose entries are the variables from the set $\{s_{ij}\} \cup \{t_{ij}\}$, with each variable appearing exactly once.
For a sequence $\bm{n}$ over $\N$, the functions
$M_0^{\bm{s}}M_1^{\bm{t}}$ and $L^{\bm{u}_{\lambda}(\bm{s}, \bm{t})}$ can be mapped to each other by the action $\Sym(\bm{s}, \bm{t})$ for any $(M_0, M_1) \in \mathcal{M}_{\mu\nu}^P(\bm{n})$ and $L \in \mathcal{M}_\lambda^P(\bm{n})$. 
In particular, for a fixed $L_0 \in \mathcal{M}_\lambda^P(\bm{n})$, the functions $M_0^{\bm{s}} M_1^{\bm{t}}$ and $L^{\bm{u}_{\lambda}(\bm{s}, \bm{t})}$ can be mapped to $L_0^{\bm{u}_{\lambda}(\bm{s}, \bm{t})}$ by the action $\Sym(\bm{s}, \bm{t})$ for any $(M_0, M_1) \in \mathcal{M}_{\mu\nu}^P(\bm{n})$ and $L \in \mathcal{M}_\lambda^P(\bm{n})$.
From the discussion above, we have the following calculation:
\begin{align*}
\sum_{\Sym(\bm{s}, \bm{t})} \zeta_{\mu}^P(\bm{s}) \zeta_\nu^P(\bm{t})
& = \sum_{\bm{n}} \sum_{(M_0, M_1)\in \mathcal{M}_{\mu\nu}^P(\bm{n})} \sum_{\Sym(\bm{s}, \bm{t})} \frac{1}{M_0^{\bm{s}} M_1^{\bm{t}}}\\
& = \sum_{\bm{n}} \sum_{(M_0, M_1)\in \mathcal{M}_{\mu\nu}^P(\bm{n})} \sum_{\Sym(\bm{s}, \bm{t})} \frac{1}{L_0^{\bm{u}_\lambda(\bm{s}, \bm{t})}}\\
& = \sum_{\bm{n}} \sum_{\Sym(\bm{s}, \bm{t})} \#\mathcal{M}_{\mu\nu}^P(\bm{n}) \frac{1}{L_0^{\bm{u}_\lambda(\bm{s}, \bm{t})}}\\
& = \sum_{\bm{n}} \sum_{\Sym(\bm{s}, \bm{t})} \sum_{\lambda \in \mathcal{G}^P(\mu, \nu)} f_{\mu\nu}^\lambda \#\mathcal{M}_{\lambda}^P(\bm{n}) \frac{1}{L_0^{\bm{u}_\lambda(\bm{s}, \bm{t})}}\\
& = \sum_{\bm{n}} \sum_{\Sym(\bm{s}, \bm{t})} \sum_{\lambda \in \mathcal{G}^P(\mu, \nu)} \sum_{L \in \mathcal{M}_{\lambda}^P(\bm{n})} f_{\mu\nu}^\lambda \frac{1}{L^{\bm{u}_\lambda(\bm{s}, \bm{t})}}\\
& = \sum_{\Sym(\bm{s}, \bm{t})} \sum_{\lambda \in \mathcal{G}^P(\mu, \nu)} f_{\mu\nu}^\lambda \zeta_\lambda^P(\bm{u}_\lambda(\bm{s}, \bm{t}))
\end{align*}
where $\bm{n}$ runs over all sequences over $\N$. 
Thus, we obtain Theorem \ref{main}.
\end{proof}

\section{Proof of Theorem \ref{skew}}

In \S3.1, we introduce the combinatorial background for shifted tableau of $\QSST$: Knuth equivalence and jeu de taquin. 
\S3.2 is devoted to providing the setup for the proof of Theorem \ref{skew}, which is presented in \S3.3.

\subsection{Combinatorial background}
This subsection is devoted to a review of Knuth equivalence and jeu de taquin, which are the combinatorial background for shifted tableaux. These combinatorial theories of shifted tableau were established by Worley \cite{W} and Sagan \cite{Sag}.

\subsubsection{Knuth equivalence}\label{Knuth}
We write a words as sequence of letters and write $w^{(0)} \cdot w^{(1)}$ or $w^{(0)}w^{(1)}$ for the word which is the juxtaposition of the two words $w^{(0)}$ and $w^{(1)}$.
The {\it content} of a word over $\N'$ is the sequence $(n_1, \dots, n_r)$ such that $n_k$ is the number of times the occurrences $k'$ and $k$ appear in $w$.
The {\it row-word} of tableau $T$ is defined by reading the entries of $T$ ``from left to right and bottom to top", i.e., 
starting with the bottom row, writing down its entries from left to right, then listing the entries from left to right in the next to the bottom row and working up to the top.
The row-word of $T$ is denoted by $w_{row}(T)$. 
The following two relations $\leftarrow$ and $\Leftarrow$ on $\N'$ were introduced by Worley:
\begin{itemize}
\item[$\bullet$]
$a \leftarrow b$ iff ($a < b$ or $a = b = |a|$)
\item[$\bullet$]
$a \Leftarrow b$ iff ($a < b$ or $a = b \ne |a|$)
\end{itemize}

\begin{dfn}[Knuth relation cf. {\cite[Definition6.2.1]{W}}]
Two words $w^{(0)}$ and $w^{(1)}$ over $\N'$ are related by {\it Knuth relation} if they satisfy one of the following conditions: 

\begin{enumerate}
\item[(i)]
The words $w^{(0)}$ and $w^{(1)}$ are of the form $w^{(0)} = u \cdot cab \cdot v$ and $w^{(1)} = u \cdot acb \cdot v$ (or vice-versa) with $a \leftarrow b \Leftarrow c$ and some words $u, v$ over $\N'$ (but $u, v$ can be the empty word).

\item[(ii)]
The words $w^{(0)}$ and $w^{(1)}$ are of the form $w^{(0)} = u \cdot bac \cdot v$ and $w^{(1)} = u \cdot bca \cdot v$ (or vice-versa) with $a \Leftarrow b \leftarrow c$ and some words $u, v$ over $\N'$ (but $u, v$ can be the empty word). 

\item[(iii)]
The words $w^{(0)}$ and $w^{(1)}$ are of the form $w^{(0)} = ab \cdot w$ and $w^{(1)} = ba \cdot w$ (or vice-versa) with $|a| \ne |b|$ and some word $w$ over $\N'$ (but $w$ can be the empty word). 

\item[(iv)]
The words $w^{(0)}$ and $w^{(1)}$ are of the form $w^{(0)} = a|a| \cdot w$ and $w^{(1)} = a|a|' \cdot w$ (or vice-versa) with $a \in \N'$ and some word $w$ over $\N'$ (but $w$ can be the empty word). 

\end{enumerate}

\end{dfn}

\begin{ex}
\begin{itemize}
\item
The words $12 \cdot 322 \cdot 41$ and $12 \cdot 232 \cdot 41$ are related by (i) because $2 \leftarrow 2 \Leftarrow 3$.
\item
The words $3'23'$ and $3'3'2$ are related by (i) because $2 \leftarrow 3' \Leftarrow 3'$.
\item
The words $32'11$ and $2'311$ are related by (iii) because $|3| \ne |2'|$.
\item
The words $22'11$ and $2211$ are related by (iv).
\end{itemize}
\end{ex}
For two words $w^{(0)}, w^{(1)}$, we say that $w^{(0)}$ is {\it Knuth equivalent} to $w^{(1)}$ if they are related by the transitive closure of the Knuth relations and we write $w^{(0)} \equiv w^{(1)}$. 
The following theorem is known about Knuth equivalence.

\begin{thms}[cf. {\cite[Theorem 6.2.2]{W}}]\label{inv}
Every word over $\N'$ is Knuth equivalent to the row-word of a unique normal tableau of $\QSST$. 
\end{thms}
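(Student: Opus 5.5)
The plan is to prove existence and uniqueness through a shifted insertion algorithm (Worley--Sagan insertion adapted to $\QSST$). To each word $w=x_1\cdots x_n$ over $\N'$ we attach a normal tableau $P(w)\in\QSST$ by inserting the letters one at a time into the empty tableau. A letter $x$ is inserted into the first row. It bumps the leftmost entry $y$ with $x\leftarrow y$ if $x$ is unmarked, and the leftmost entry with $x\Leftarrow y$ if $x$ is marked. Bumped entries go to the next row. Once a bumped entry lands on the main diagonal, insertion switches to columns, with the roles of $\leftarrow$ and $\Leftarrow$ exchanged. The marking of the letter that first occupies a diagonal box is recorded and kept, so that primes on the diagonal (allowed in $\QSST$) are not lost. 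The first step is to check that $P(w)$ satisfies PST1--PST3. This is a direct case analysis on the bumping rules.

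\textbf{Existence.} We show that for any tableau $T\in\QSST$ and letter $x$,
\[
w_{row}(T)\cdot x \equiv w_{row}(T\leftarrow x).
\]
By induction on the length of $w$ this gives $w\equiv w_{row}(P(w))$. The row part of an insertion is the classical computation. Moving $x$ leftward through a row $r_1\cdots r_m$ and extracting the bumped letter $y$ to the front uses only relations (i) and (ii). This is exactly why the two orderings $\leftarrow$ and $\Leftarrow$ appear in them. The diagonal/column phase is where relations (iii) and (iv) enter. When the bumped letter reaches the front of a row of length one or two starting on the diagonal, the word has the form $ab\cdot w'$ with $a$ the diagonal entry. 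Relation (iii) lets us swap distinct values there. Relation (iv) accounts for the change of marking $a|a|\leftrightarrow a|a|'$ that happens when a letter is deflected from row to column insertion.

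\textbf{Uniqueness.} It suffices to prove two facts:
\begin{enumerate}
\item[(a)] $P(w_{row}(T))=T$ for every normal $T\in\QSST$;
\item[(b)] $w\equiv w'$ implies $P(w)=P(w')$.
\end{enumerate}
Then if $w\equiv w_{row}(T_1)\equiv w_{row}(T_2)$, we get $T_1=P(w_{row}(T_1))=P(w_{row}(T_2))=T_2$. Fact (a) follows by inserting the rows bottom to top. Each new row (from the bottom) is smaller in the first position than the row below it, so its letters push the earlier rows down intact. For (b) it is enough to treat a single Knuth relation. Since insertion is sequential, relations (iii) and (iv), which act only on the first two letters, reduce to checking $P(ab)=P(ba)$ for $|a|\ne|b|$ and $P(a|a|)=P(a|a|')$, followed by insertion of the same suffix. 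Relations (i) and (ii) reduce to showing that $T\leftarrow c\leftarrow a\leftarrow b$ and $T\leftarrow a\leftarrow c\leftarrow b$ agree. Here one tracks the bumping paths of the three letters, Knuth-style, through both the row phase and the column phase.

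The main obstacle is (b) for relations (i)/(ii) when the bumping paths cross the main diagonal. There the insertion changes from row to column mode, and the orderings $\leftarrow,\Leftarrow$ swap roles, so the classical argument does not transfer verbatim. I would first try to reduce to the case where $T$ is a single row plus its diagonal interaction, using that row insertion into lower rows is again an insertion of a shorter word. If that reduction breaks down, I would instead invoke the alternative characterization via shifted jeu de taquin (Sagan, Worley). Slides preserve the Knuth class of the row-word, and rectification is independent of the slide order. Rectification $\Rect$ then gives a class invariant, and $\Rect(T)=T$ for normal $T$.
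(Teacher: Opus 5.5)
The paper does not prove this statement; it quotes it from Worley's thesis. Your proposal therefore has to carry the whole argument. Its architecture is the standard one: insertion, existence via $w_{row}(T)\cdot x\equiv w_{row}(T\leftarrow x)$, and uniqueness via (a) and (b). But the decisive steps are not carried out.

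\textbf{The main obstacle is left open, and the fallback is circular.} You leave (b) for relations (i)/(ii) across the diagonal unresolved. In this paper, independence of rectification from slide order is \emph{deduced} from Theorem \ref{inv} together with Theorem \ref{rect_word}; that is how $\Rect(L)=P(w_{row}(L))$ is obtained. Even granting order-independence for every fixed skew tableau, it says nothing about two different words $w\equiv w'$ having the same rectification. That is again the uniqueness statement you are trying to prove.

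\textbf{The existence step has a similar hole.} Relations (iii) and (iv) act only on the first two letters of the whole word, i.e.\ at the start of the bottom row of $T$. A diagonal bump in row $i<\ell(T)$ occurs behind the prefix $R_\ell\cdots R_{i+1}$ of the row-word. The ensuing column insertion then runs through entries of several rows scattered across the word. So the word is not of the form $ab\cdot w'$, and (iii)/(iv) cannot be applied as you describe. You need a genuine lemma that column insertion respects $\equiv$ (for instance, an equivalence between row and column reading words of shifted tableaux). That lemma is where (iii)/(iv) do their real work.

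\textbf{The row-bumping rule is stated incorrectly.} As written, an unmarked $x$ bumps the leftmost $y$ with $x\leftarrow y$, i.e.\ $y\ge x$. Inserting $2$ into the row $1\,2\,3$ then bumps the $2$ into row two, putting two unmarked $2$'s in one column. Row insertion should bump the leftmost $y$ with $x\Leftarrow y$ for every $x$; column insertion should bump the topmost $y$ with $x\leftarrow y$.

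\textbf{The diagonal-mark rule fails.} This rule is exactly what distinguishes the $\QSST$ statement from the $\PSST$ one, yet it is not pinned down, and read literally it is wrong. For $w=21'$ the first occupant of $(1,1)$ is the unmarked $2$, so you get the one-row tableau with entries $1,2$. But $21'\equiv 1'2$ by (iii), and $1'2$ is the row-word of the one-row tableau with entries $1',2$, which lies in $\QSST$. Moreover $12\not\equiv 1'2$, since only (iii) and (iv) act on two-letter words. So your (b) already fails for relation (iii).

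\textbf{Your argument for (a) uses the unshifted picture.} The claim that ``earlier rows are pushed down intact'' is not how shifted insertion behaves. There, the first letter of each new row bumps a diagonal entry and triggers column insertion. This shifts the previously built row one step to the right before the later letters push it down. So (a) also depends on the column phase and on the diagonal rule, neither of which you verify.
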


We denote such a unique normal shifted tableau by $P(w)$, i.e., $w \equiv w_{row}(P(w))$ for a word $w$ over $\N'$.

\subsubsection{Jeu de Taquin}
Here, we review the operation {\it jeu de taquin}. It is an operation which transforms a skew shifted tableau into a normal tableau. 

First, we define the operation called {\it elementary slide}. An elementary slide is performed on a tableau $L$ over $\N'$ such that an entry is not in $\N'$ but is a dot. Thus, if $a$ is the element below the dot, and $b$ is the element to the right of the dot, then we replace the dot with $a$ if $a \leftarrow b$ and with $b$ if $b \Leftarrow a$. If either $a$ or $b$ is missing from the tableau, we replace the dot with the present entry.  Elementary slides are performed as shown in Figure \ref{slide_fig}. However, there is an exceptional case, which is shown in Figure \ref{slide_fig2}. If the dot is in the box $(i, i)$, the entry in the box $(i, i + 1)$ is $k'$, and that in $(i +1, i + 1)$ is $k$ (resp. $k'$), we put the dot into $(i + 1, i + 1)$, put $k$ into $(i, i + 1)$, and put $k$ (resp. $k'$) into $(i, i)$.

A skew shifted diagram $\lambda/\mu$ has one or more {\it inside corners}. An inside corner is a corner of the smaller (deleted) diagram $\mu$. An {\it outside corner} is a corner of $\lambda$. 
The operation of a {\it slide} takes a tableau $L = (l_{ij}) \in \QSST(\lambda/\mu)$ and an inside corner $c$ of $\lambda/\mu$. Assume that a dot is put into the inside corner $c$ of $L$. 
The slide is completed by repeating elementary slides until the dot is placed in an outside corner.
If the dot is placed in an outside corner of the tableau, we remove both the box containing the dot and the dot itself.

\begin{figure}[h]
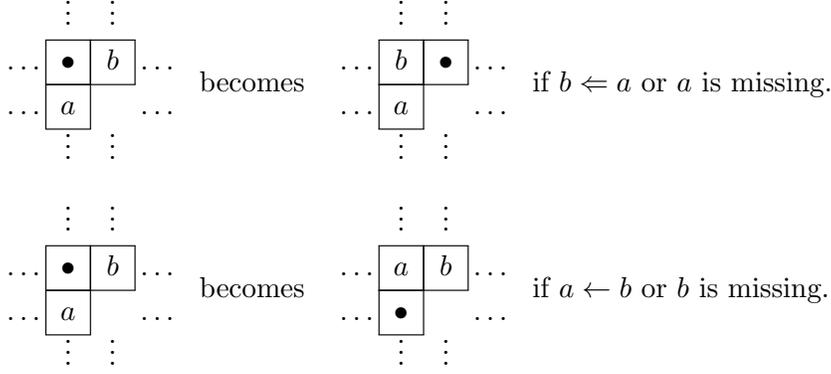
\label{slide}
\begin{itemize}[label = {}]
\item
$
\vcenter{\hbox{
~\begin{ytableau}
\none & \none[\vdots] & \none[\vdots] \\
\none[\dots] & \bullet & b & \none[\dots] \\
\none[\dots] & a & \none & \none[\dots]\\
\none & \none[\vdots] & \none[\vdots]
\end{ytableau}~}}$
becomes 
$
\vcenter{\hbox{
~\begin{ytableau}
\none & \none[\vdots] & \none[\vdots] \\
\none[\dots] & b & \bullet & \none[\dots] \\
\none[\dots] & a & \none & \none[\dots]\\
\none & \none[\vdots] & \none[\vdots]
\end{ytableau}~}}$
if $b \Leftarrow a$ or $a$ is missing.
\item
$
\vcenter{\hbox{
~\begin{ytableau}
\none & \none[\vdots] & \none[\vdots] \\
\none[\dots] & \bullet & b & \none[\dots] \\
\none[\dots] & a & \none & \none[\dots]\\
\none & \none[\vdots] & \none[\vdots]
\end{ytableau}~}}$
becomes 
$
\vcenter{\hbox{
~\begin{ytableau}
\none & \none[\vdots] & \none[\vdots] \\
\none[\dots] & a & b & \none[\dots] \\
\none[\dots] & \bullet & \none & \none[\dots]\\
\none & \none[\vdots] & \none[\vdots]
\end{ytableau}~}}$
if $a \leftarrow b$ or $b$ is missing.
\end{itemize}
\caption{Elementary slide}
\label{slide_fig}
\end{figure}

\begin{figure}[h]
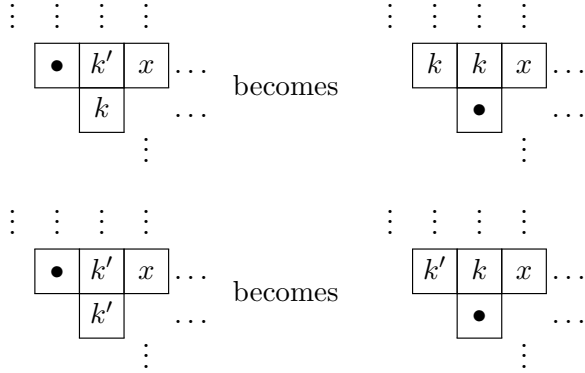
\label{slide2}
\begin{itemize}[label={}]
\item
$
\vcenter{\hbox{
~\begin{ytableau}
\none[\vdots] & \none[\vdots] & \none[\vdots] & \none[\vdots] \\
\none & \bullet & k' & x & \none[\dots] \\
\none & \none & k & \none & \none[\dots] \\
\none & \none & \none & \none[\vdots]
\end{ytableau}~}}$
becomes
$
\vcenter{\hbox{
~\begin{ytableau}
\none[\vdots] & \none[\vdots] & \none[\vdots] & \none[\vdots] \\
\none & k & k & x & \none[\dots] \\
\none & \none & \bullet & \none & \none[\dots] \\
\none & \none & \none & \none[\vdots]
\end{ytableau}~}}$
\item
$
\vcenter{\hbox{
~\begin{ytableau}
\none[\vdots] & \none[\vdots] & \none[\vdots] & \none[\vdots] \\
\none & \bullet & k' & x & \none[\dots] \\
\none & \none & k' & \none & \none[\dots] \\
\none & \none & \none & \none[\vdots]
\end{ytableau}~}}$
becomes
$
\vcenter{\hbox{
~\begin{ytableau}
\none[\vdots] & \none[\vdots] & \none[\vdots] & \none[\vdots] \\
\none & k' & k & x & \none[\dots] \\
\none & \none & \bullet & \none & \none[\dots] \\
\none & \none & \none & \none[\vdots]
\end{ytableau}~}}$
\end{itemize}
\caption{The exceptional elementary slides}
\label{slide_fig2}
\end{figure}

\begin{figure}[h]
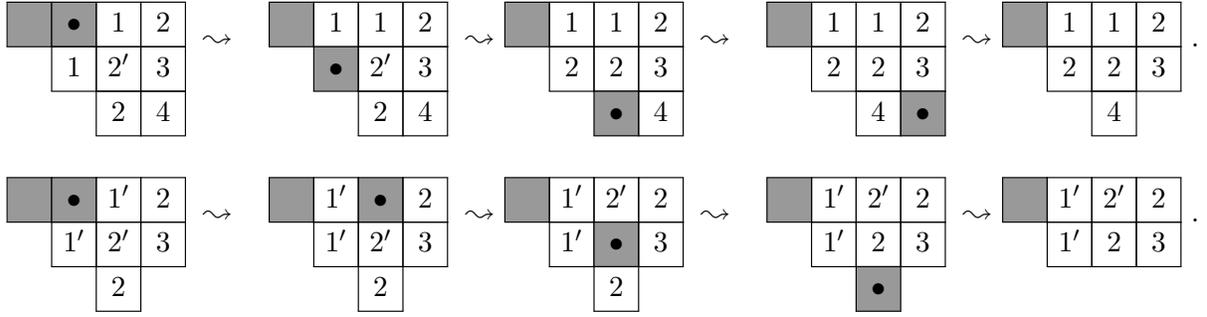

\begin{align*}
&~\begin{ytableau}
*(black!40){} & *(black!40){\bullet} & 1 & 2 \\
\none & 1 & 2' & 3 \\
\none & \none & 2 & 4
\end{ytableau}~\leadsto
&~\begin{ytableau}
*(black!40){} & 1 & 1 & 2 \\
\none & *(black!40){\bullet} & 2' & 3 \\
\none & \none & 2 & 4
\end{ytableau}~\leadsto
&~\begin{ytableau}
*(black!40){} & 1 & 1 & 2 \\
\none & 2 & 2 & 3 \\
\none & \none & *(black!40){\bullet} & 4
\end{ytableau}~\leadsto
&~\begin{ytableau}
*(black!40){} & 1 & 1 & 2 \\
\none & 2 & 2 & 3 \\
\none & \none & 4 & *(black!40){\bullet}
\end{ytableau}~\leadsto
&~\begin{ytableau}
*(black!40){} & 1 & 1 & 2 \\
\none & 2 & 2 & 3 \\
\none & \none & 4 
\end{ytableau}~. \\[4mm]
&~\begin{ytableau}
*(black!40){} & *(black!40){\bullet} & 1' & 2 \\
\none & 1' & 2' & 3 \\
\none & \none & 2 
\end{ytableau}~\leadsto
&~\begin{ytableau}
*(black!40){} & 1' & *(black!40){\bullet} & 2 \\
\none & 1' & 2' & 3 \\
\none & \none & 2 
\end{ytableau}~\leadsto
&~\begin{ytableau}
*(black!40){} & 1' & 2' & 2 \\
\none & 1' & *(black!40){\bullet} & 3 \\
\none & \none & 2 
\end{ytableau}~\leadsto
&~\begin{ytableau}
*(black!40){} & 1' & 2' & 2 \\
\none & 1' & 2 & 3 \\
\none & \none & *(black!40){\bullet} 
\end{ytableau}~\leadsto
&~\begin{ytableau}
*(black!40){} & 1' & 2' & 2 \\
\none & 1' & 2 & 3 
\end{ytableau}~.
\end{align*}
\caption{Examples of slide}
\end{figure}

Jeu de taquin is performed as a succession of {\it slides} on a tableau as follows. 
Given a tableau $L \in \QSST(\lambda/\mu)$, the process of slide can be carried out from any inside corner. When we complete the slide, another inside corner can be chosen for the resulting tableau, and the procedure is repeated, until there are no more inside corners. When we complete the whole process, the resulting tableau has a normal shifted shape. 
The whole process is called the {\it jeu de taquin}. 
The following theorem is known.

\begin{thms}[cf. {\cite[Theorem 6.4.3]{W}}]\label{rect_word}
Suppose a slide can be performed on $L \in \QSST$ to produce the tableau $M$. Then the row-word of $L$ is equivalent under the Knuth relations to the row word of $M$, i.e., 
$$w_{row}(L) \equiv w_{row}(M).$$
\end{thms}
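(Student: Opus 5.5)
The plan is to reduce the statement to a single elementary slide and then check directly that each elementary slide changes the row-word only by a chain of Knuth relations. A slide is a finite sequence of elementary slides, followed by deleting the box that holds the dot at an outside corner. We read the row-word of an intermediate tableau with the dot simply omitted. Then deleting the final dot does not change the word. By transitivity of $\equiv$ it therefore suffices to prove $w_{row}(L_1)\equiv w_{row}(L_2)$ whenever $L_2$ is obtained from $L_1$ by one elementary slide. Here $L_1$ and $L_2$ are fillings with one dot, satisfying PST1--PST3 away from the dot.

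There are three cases.

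\textbf{Horizontal move.} The dot moves right: $b$ moves left within the same row. This does not change the left-to-right, bottom-to-top reading order, so the row-word is unchanged.

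\textbf{Vertical move, dot off the diagonal.} The dot moves from $(i,j)$ to $(i+1,j)$, carrying $a$ from row $i+1$ up to row $i$. Write row $i+1$ as $x\,a\,y$ and row $i$ as $u\,\bullet\,v$, where $u$ lies weakly left of column $j-1$ in row $i$. The row-word changes locally from $\cdots x\,a\,y\;u\,v\cdots$ to $\cdots x\,y\;u\,a\,v\cdots$. The standard approach, following Worley and the classical Schensted/Knuth argument, is to carry $a$ rightward across $y$ and then across $u$ one letter at a time. Each transposition should be an instance of relation (i) or (ii), with the needed witness letter taken from the neighbouring letters. The inequalities that make these relations applicable come from the semistandard conditions:
\begin{itemize}
\item entries of $y$ are $\ge a$, with equal unmarked letters impossible because of PST3 via the column condition with the entries of $v$ above them;
\item entries of $u$ are $\le$ the entries below them;
\item the case condition $a\leftarrow b$ holds, and PST2 and PST3 control repeated $k$ and $k'$.
\end{itemize}
The relations $\leftarrow$ and $\Leftarrow$ are designed precisely so that the column/row strictness of marked versus unmarked letters matches the conditions in (i) and (ii). The cleanest organisation is the usual induction on the length of $y$ and $u$. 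First move $a$ past $y$, using column-pairs $y_k$ over $v_k$ as witnesses: this is relation (ii), in the form $bac\to bca$. Then move $a$ past $u$, using the top-row letters as witnesses: this is relation (i), in the form $cab\to acb$.

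\textbf{Diagonal and exceptional cases.} The dot sits at $(i,i)$, so $u$ is empty and row $i+1$ starts at column $i+1$. Moving the first letter of a shifted row over a diagonal position is where relations (iii) and (iv) enter. In a shifted tableau, the part of the row-word from row $i$ upward is itself the row-word of a shifted normal-shaped piece, and the first two letters of a row at the diagonal can be interchanged or re-marked. To make (iii) and (iv) usable we will apply them to the suffix consisting of rows $i,i-1,\dots,1$: this is a valid $u$-prefix/$w$-suffix decomposition, since Knuth equivalence is a congruence for left concatenation and (iii), (iv) act on the beginning of the suffix. For the exceptional slides of Figure \ref{slide_fig2}, the pattern $k'$ over $k$ becomes $k\,k$ with the dot below (and similarly for $k'$ over $k'$). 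Here the change in the word is a move of $a$ combined with a re-marking of $k'\mapsto k$ at the start of a row, which is exactly relation (iv).

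The main obstacle is the vertical move. The difficulty is verifying, uniformly over all marked/unmarked combinations, that each adjacent transposition really satisfies the hypothesis $a\leftarrow b\Leftarrow c$ or $a\Leftarrow b\leftarrow c$, and handling the diagonal boundary where (iii)/(iv) must replace (i)/(ii). If the direct letter-by-letter check becomes unwieldy, the fallback is Worley's approach. In that approach, one first shows that row-words of shifted tableaux are Knuth equivalent to the words produced by mixed (Sagan--Worley) insertion. Then the elementary slide is compared with insertion of the same letters, reducing to the insertion lemma for a single row and a column-pair.
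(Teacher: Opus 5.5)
The paper gives no proof of this statement; it simply quotes Worley's Theorem 6.4.3. Your reduction to elementary slides and your horizontal case are fine. However, the step that carries all of the shifted content fails. To use (iii) and (iv), you apply them to the suffix formed by rows $i,i-1,\dots,1$, on the grounds that $\equiv$ is a congruence for left concatenation. It is not. Relations (iii) and (iv) act only on the first two letters of the \emph{whole} word, so $\equiv$ is only a right congruence ($w\equiv w'\Rightarrow wv\equiv w'v$). For example, $12\equiv 21$ by (iii). But the classes of $321$ and $312$ are $\{123,213,231,321\}$ and $\{132,312\}$, so $3\cdot 12\not\equiv 3\cdot 21$. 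Because the row-word begins with the bottom row, the letters of all rows below the slide stand in front of the place where you want to swap or re-mark. So neither your diagonal case nor your treatment of the exceptional slides is justified; a re-marking at the start of a \emph{row} is not an instance of (iv).

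You have also put the difficulty in the wrong place. Apart from the exceptional slides, a dot at $(i,i)$ has no box below it and only moves horizontally. The genuinely shifted case is a vertical move out of a row $i$ whose leftmost filled box is $(i,i)$. Then $u$ has one more letter than $x$, since the diagonal entry has nothing beneath it, and no argument using (i), (ii) alone can work. Take $\lambda/\mu=(3,2,1)/(1)$ with rows $\bullet\,1\,3$, then $2\,4$, then $5$. After $1$ slides left, $2$ moves up and the row-word changes from $52413$ to $54123$. On unmarked letters, (i) and (ii) are exactly the classical Knuth relations, and these two words have RSK shapes $(2,2,1)$ and $(3,1,1)$. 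So (iii) is unavoidable, yet the $5$ from the bottom row sits in front. The equivalence is true, but every derivation has to move that $5$, e.g.\ $52413\to52143\to25143\to21543\to12543\to15243\to15423\to51423\to54123$.

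What is missing is an argument that controls this prefix. Note that in this situation rows $\ge i+1$ form a normal shifted tableau. You could then compare $P(\cdot)$ computed by Sagan--Worley insertion, using $w\equiv w'\iff P(w)=P(w')$. Alternatively, you could use right congruence to replace the prefix by an equivalent word that brings the needed letters to the front. Your closing ``fallback'' only names Worley's method without carrying it out.

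Less seriously, in moving $a$ across $y$ you cannot use $v_k$ as the witness in (ii). In $b\,a\,c\to b\,c\,a$ the witness must be the letter immediately before $a$, whereas $v_k$ lies to the right of $u$. Even classically this step needs a genuine two-row lemma.
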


Combining Theorem \ref{rect_word} and Theorem \ref{inv}, we have that $L \in \QSST$  becomes the normal shifted tableau $P(w_{row}(L))$ by jeu de taquin. We define $\Rect(L) = P(w_{row}(L))$ for $L \in \QSST$. The notation $\Rect$ is derived from the term rectification.

The following theorem about jeu de taquin is one of the combinatorial rules for computing the coefficients $f_{\mu\nu}^\lambda$.

\begin{thms}[cf. {\cite[Theorem 7.2.2]{W}}]\label{LRrule2}
For any $M \in \QSST(\nu)$, 
$$
\#\{L \in \QSST(\lambda/\mu) \ |\ \Rect(L) = M\} = f_{\mu\nu}^\lambda.
$$
\end{thms}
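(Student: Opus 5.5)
The plan is to reduce the statement to a "shape-only" independence result and then read off the constant from the expansion (\ref{LR2}). For each $M \in \QSST(\nu)$ set
$$c(M) = \#\{L \in \QSST(\lambda/\mu) \mid \Rect(L) = M\}.$$
Since $\Rect$ preserves content, grouping the monomial expansion of $Q_{\lambda/\mu}$ according to $\Rect(L)$ gives
$$Q_{\lambda/\mu} = \sum_{\nu} \sum_{M \in \QSST(\nu)} c(M) \prod_{(i,j)} x_{m_{ij}}.$$
This uses Theorem \ref{rect_word} and Theorem \ref{inv}, which make $\Rect(L)=P(w_{row}(L))$ well defined and content-preserving. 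If we know that $c(M)=c_\nu$ depends only on $\nu=\shape(M)$, then $Q_{\lambda/\mu}=\sum_\nu c_\nu Q_\nu$. The Schur $Q$-functions of strict partitions are linearly independent, so comparison with (\ref{LR2}) forces $c_\nu = f^\lambda_{\mu\nu}$.

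\textbf{Step 1 (main obstacle): independence of $M$.} Given $M, M' \in \QSST(\nu)$, I want a bijection between $\{L : \Rect(L)=M\}$ and $\{L' : \Rect(L')=M'\}$. I would build it by reversing slides.

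Fix once and for all an order in which to fill the inside corners of $\lambda/\mu$, that is, a standard shifted tableau $S$ of shape $\mu$. Perform jeu de taquin on $L$ using the corners in the reverse order of $S$, and record the outside corners vacated at each slide. This produces a chain of shapes $\nu=\nu^{(0)}\subset\nu^{(1)}\subset\cdots\subset\lambda$, equivalently a standard shifted tableau $V$ of shape $\lambda/\nu$.

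Elementary slides, including the exceptional ones in Figure \ref{slide_fig2}, are invertible once the vacated box is known. Hence $L \mapsto (\Rect(L), V)$ is injective, and $L$ is recovered by reverse-sliding $\Rect(L)$ into the boxes of $V$ in reverse order.

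The difficulty is surjectivity onto $\{M'\} \times \{\text{the same set of } V\}$. Concretely, one must show that reverse-sliding any $M'$ of shape $\nu$ along $V$ yields a tableau of shape exactly $\lambda/\mu$, with the vacated inside corners following $S$, whenever this happens for $M$. This is the shifted analogue of Haiman's dual equivalence: the shapes traced by a sequence of slides depend only on the shape of the tableau being slid, not on its entries.

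\textbf{Step 2: proving that shape-tracing depends only on shape.} My first attempt would reduce to standard tableaux via standardization, which is compatible with Worley's slide rules because $\leftarrow$ and $\Leftarrow$ are exactly the tie-breaking rules. I would then argue by induction using Knuth equivalence.

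The key sub-claim is that if $w_{row}(T) \equiv w_{row}(T')$, then sliding $T$ and $T'$ into the same corner changes their shapes identically. Any two tableaux of the same normal shape $\nu$ can be connected through Sagan--Worley insertion recording tableaux. So it suffices to check that each elementary Knuth move (i)--(iv) on the reading word, realized on tableaux, commutes with a single slide up to shape. This is a finite local case analysis, and the diagonal cases (iii), (iv) together with the exceptional slides are the delicate part.

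If this local analysis becomes unwieldy, I would instead invoke the fact that Sagan--Worley (mixed) insertion gives a bijection $w \mapsto (P(w), R(w))$ with $R(w)$ standard of the same shape. With that bijection, the set of words in a Knuth class with given $P$ is in bijection with the words in any other class of the same shape, and the count $c(M)$ can be written as a count of recording tableaux, which is manifestly independent of $M$.
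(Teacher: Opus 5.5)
The paper gives no proof of this statement; it quotes it from Worley's thesis. Your outer reduction is sound. $\Rect$ preserves content, so if $c(M)$ depends only on $\shape(M)$, then (\ref{LR2}) and linear independence of the $Q_\nu$ give $c_\nu=f^\lambda_{\mu\nu}$. Step 1 also correctly isolates what is needed: any two tableaux of normal shape $\nu$ must trace the same shapes under every sequence of reverse slides.

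\textbf{The gap is in Step 2.} It does not establish this, and its key sub-claim is false. Take $\lambda/\mu=(3,1)/(2)$. Let $T$ have $1$ in box $(2,2)$ and $2$ in box $(1,3)$, and let $T'$ have these two entries swapped. Their row words $12$ and $21$ are related by (iii), so both rectify to the one-row tableau $12$. Yet sliding into the inside corner $(1,2)$ vacates $(2,2)$ for $T$ and $(1,3)$ for $T'$. This is no accident: $T,T'$ are exactly the $f^{(3,1)}_{(2)(2)}=2$ tableaux rectifying to $M=12$, and they are told apart precisely by their shape evolution. Moreover, the sub-claim could never connect $M\neq M'$ of the same normal shape. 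By the uniqueness in Theorem \ref{inv}, distinct normal tableaux are never Knuth equivalent.

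You are conflating Knuth equivalence (same $P$, varying word) with dual equivalence (same shape evolution). The moves you need are Haiman's elementary dual equivalences. These act on the values of a standard tableau; they are what Knuth moves of a word induce on its recording tableau, not Knuth moves of the tableau's own row word. Two facts are then required:
\begin{enumerate}
\item such moves commute with slides up to shape;
\item all standard shifted tableaux of a given normal shape are connected by them (shifted dual equivalence).
\end{enumerate}
That is the real content of the theorem, and the proposal supplies neither.

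\textbf{The fallback has the same hole.} Writing $c(M)$ as a count of recording tableaux $R$ requires that one property depend only on $R$: whether the word with insertion pair $(M,R)$ is the row word of some $L\in\QSST(\lambda/\mu)$. That is not manifest. It says the set of row words of tableaux of shape $\lambda/\mu$ is closed under the moves preserving $R$, which is again the dual-equivalence statement.

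A smaller point concerns standardization. Your tie-breaking argument via $\leftarrow,\Leftarrow$ only covers ordinary elementary slides. The exceptional slides of Figure \ref{slide_fig2} rewrite marks on the diagonal, so compatibility of slide paths with standardization needs a separate check.
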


By the definition of slide, if $L$ is a skew shifted tableau of shifted shape $\lambda/\nu$, then the shape of $\Rect(L)$ is a shifted shape $\nu$ such that $SD(\nu) \subset SD(\lambda)$.

\subsection{Preparation for the proof of Theorem \ref{skew}}\label{prep_subs}

Let $\tilde{\N}=\{k_l\ |\ k, l \in \N\}$ be a set with the total order $1_1 < 1_2 < \dots < 2_1 < 2_2 <\dots $ and let $\tilde{\N}'=\{k_l, k'_l\ |\ k, l \in \N\}$ be a set with the total order
$1'_1 < 1_1 < 1'_ 2 < 1_2 < \dots < 2'_1 < 2_1 < 2'_2 < \dots .$Let $|k_l| = |k'_l| = k_l$ for $k, l \in \N$. 
Furthermore, we define the relations $\leftarrow, \Leftarrow$ on $\tilde{\N}'$ similarly to the case of $\N'$ as follows:
for $a, b \in \tilde{\N}',$
\begin{itemize}
\item[$\bullet$]
$a \leftarrow b$ iff ($a < b$ or $a = b = |a|$)
\item[$\bullet$]
$a \Leftarrow b$ iff ($a < b$ or $a = b \ne |a|$)
\end{itemize}

Based on this order and relations, we define $\QSST$ and Knuth equivalence on  words over $\tilde{\N}'$. We denote the set of these tableaux by $\QSST_{\tilde{\N}'}$ and the equivalence by $\equiv_{\tilde{\N}'}$. Furthermore, we define the operation of slide on $\QSST_{\tilde{\N}'}$ with this order. 
We identify the elements of $\tilde{\N}$ with the unmarked elements of $\tilde{\N}'$, and regard $\tilde{\N}$ as a subset of $\tilde{\N}'$. This allows us to treat words over $\tilde{\N}$ as words over $\tilde{\N}'$, that simply contain unmarked elements. Consequently, the Knuth equivalence $\equiv_{\tilde{\N}'}$ can be applied to these words. 
Analogous to Theorem \ref{inv} and Theorem \ref{rect_word} for $\QSST$, we have the following theorems for $\QSST_{\tilde{\N}'}$.

\begin{thms}[$\tilde{\N}'$ analogues of Theorem \ref{inv}]\label{inv_tilde}
Every word $\tilde{w}$ over $\tilde{\N}'$ is Knuth equivalent to the row-word of a unique normal tableau of $\QSST_{\tilde{\N}'}$. We denote such a unique normal tableau by $P(\tilde{w})$.
\end{thms}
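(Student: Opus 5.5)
The plan is to reduce the statement to Theorem \ref{inv} by an order-preserving relabeling of $\tilde{\N}'$ onto $\N'$ that respects marks. Fix a word $\tilde{w}$ over $\tilde{\N}'$. Only finitely many letters occur in it, say the unmarked values $k_l$ (with or without primes) forming a finite set $S\subset\tilde{\N}$. List $S$ in increasing order, $\sigma_1<\sigma_2<\dots<\sigma_m$, and define $\phi$ by $\phi(\sigma_r)=r$ and $\phi(\sigma_r')=r'$. Since the total order on $\tilde{\N}'$ interleaves $k'_l<k_l<k'_{l+1}<\cdots$ exactly as $\N'$ interleaves $r'<r<(r+1)'$, the map $\phi$ is an order isomorphism from $S\cup S'$ onto $\{1',1,\dots,m',m\}$. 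It also satisfies $|\phi(a)|=\phi(|a|)$ and preserves the property of being marked.

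The key steps are as follows.

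First, I check that $\phi$ transports the relations: $a\leftarrow b \iff \phi(a)\leftarrow\phi(b)$ and $a\Leftarrow b\iff\phi(a)\Leftarrow\phi(b)$. This is immediate, because both relations are defined only through the order, equality, and the condition $a=|a|$.

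Second, I deduce that $\phi$ maps Knuth relations (i)–(iv) to Knuth relations. Relations (i) and (ii) involve only $\leftarrow$ and $\Leftarrow$. Relation (iii) involves $|a|\ne|b|$, which is preserved because $\phi$ is injective on unmarked values. Relation (iv) uses $a|a|\leftrightarrow a|a|'$, which is preserved by compatibility with $|\cdot|$ and with marks. The same holds for $\phi^{-1}$, as long as one stays among words whose letters lie in $S\cup S'$. Every Knuth move preserves the multiset of unmarked values, so any word equivalent to $\tilde w$ has letters in $S\cup S'$. Hence $\tilde w\equiv_{\tilde{\N}'}\tilde v \iff \phi(\tilde w)\equiv\phi(\tilde v)$.

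Third, I check that $\phi$ applied entrywise maps normal tableaux in $\QSST_{\tilde{\N}'}$ with entries in $S\cup S'$ bijectively onto normal tableaux in $\QSST$ with entries in $\{1',\dots,m\}$. This holds because PST1–PST3 are stated purely in terms of the order and of marks. I also note that $w_{row}$ commutes with $\phi$.

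Existence then follows: by Theorem \ref{inv}, $\phi(\tilde w)\equiv w_{row}(T)$ for some $T\in\QSST$. The content of $T$ equals that of $\phi(\tilde w)$, so the entries of $T$ lie in $\{1',\dots,m\}$. Thus $\phi^{-1}(T)$ is defined, lies in $\QSST_{\tilde{\N}'}$, and $\tilde w\equiv_{\tilde{\N}'} w_{row}(\phi^{-1}(T))$.

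Uniqueness follows similarly. Suppose two normal tableaux $\tilde T_1,\tilde T_2$ have row-words equivalent to $\tilde w$. By content preservation their entries lie in $S\cup S'$. Applying $\phi$ gives two tableaux in $\QSST$ whose row-words are equivalent to $\phi(\tilde w)$, so they coincide by Theorem \ref{inv}, and therefore $\tilde T_1=\tilde T_2$.

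The main obstacle I expect is the bookkeeping in the second step. I must confirm that Knuth equivalence preserves content, so that all intermediate words stay inside the finite alphabet where $\phi$ is invertible. I must also handle relation (iv) carefully, since it introduces a new marked letter $|a|'$. However, $|a|'$ lies in $S'$ whenever $|a|\in S$, so this is harmless.
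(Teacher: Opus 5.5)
Your proposal is correct and follows the paper's route: the paper only asserts that Theorem \ref{inv_tilde} follows by translating Theorem \ref{inv} along order-preserving, mark-respecting relabelings between $\N'$ and $\tilde{\N}'$. You supply the details the paper omits, in particular restricting to the finite alphabet $S\cup S'$ (needed because $\tilde{\N}'$ as a whole is not order-isomorphic to $\N'$) and using content preservation under Knuth moves to keep every intermediate word inside that alphabet.
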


\begin{thms}[$\tilde{\N}'$ analogues of Theorem \ref{rect_word}]\label{rect_word_tilde}
Suppose a slide can be performed on $\tilde{L} \in \QSST_{\tilde{\N}'}$ to produce the tableau $\tilde{M} \in \QSST_{\tilde{\N}'}$. Then the row-word of $\tilde{L}$ is equivalent under the Knuth relations to the row word of $\tilde{M}$, i.e., 
$$w_{row}(\tilde{L}) \equiv w_{row}(\tilde{M}).$$
\end{thms}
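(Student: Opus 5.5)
The statement to prove is Theorem \ref{rect_word_tilde}, the $\tilde{\N}'$-analogue of Theorem \ref{rect_word}. My plan is to transport it from $\N'$ by an order isomorphism rather than redo Worley's argument. The key observation is that $\tilde{\N}'$, with its total order $1'_1 < 1_1 < 1'_2 < 1_2 < \dots$, has the same order type as $\N'$ once we restrict to finitely many letters. The relations $\leftarrow$, $\Leftarrow$, the marking, and the map $|\cdot|$ are all defined purely in terms of this order together with the marked/unmarked distinction, and the pairs $\{k'_l, k_l\}$ play exactly the role of $\{k', k\}$.

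First I fix $\tilde{L}$ and the tableau $\tilde{M}$ produced by the slide. Only finitely many letters occur in $\tilde{L}$; the slide only moves entries, so these are also the letters of $\tilde{M}$. Let $S \subset \tilde{\N}$ be the finite set of unmarked values $|a|$ for letters $a$ occurring. I list $S$ in increasing order as $\sigma_1 < \dots < \sigma_m$ and define $\varphi(\sigma_r) = r$ and $\varphi(\sigma_r') = r'$. This is an order-preserving injection from $S \cup S'$ into $\N'$ that respects marks and commutes with $|\cdot|$.

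Next I check the transport properties, applying $\varphi$ entrywise to tableaux and letterwise to words.
\begin{enumerate}
\item[(a)] For letters $a, b$ in $S \cup S'$, $a \leftarrow b$ if and only if $\varphi(a) \leftarrow \varphi(b)$, and likewise for $\Leftarrow$. This holds because both relations are defined via $<$, $=$, and whether $a = |a|$, all preserved by $\varphi$; in particular $|a| \neq |b|$ if and only if $|\varphi(a)| \neq |\varphi(b)|$.
\item[(b)] Conditions PST1--PST3 are preserved, so $\varphi$ maps $\QSST_{\tilde{\N}'}$ tableaux with letters in $S \cup S'$ into $\QSST$ of the same shape.
\item[(c)] Elementary slides, including the exceptional ones of Figure \ref{slide_fig2}, are decided only by the relations $\leftarrow$, $\Leftarrow$ and by the marked/unmarked pattern $k', k$. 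Hence $\varphi$ commutes with slides, so $\varphi(\tilde{L})$ slides to $\varphi(\tilde{M})$ in $\QSST$.
\item[(d)] $\varphi(w_{row}(\tilde{L})) = w_{row}(\varphi(\tilde{L}))$.
\end{enumerate}
By Theorem \ref{rect_word}, $w_{row}(\varphi(\tilde{L})) \equiv w_{row}(\varphi(\tilde{M}))$ via a chain of Knuth relations (i)--(iv). Every intermediate word is a permutation of the same multiset of letters, up to the mark changes of relation (iv) on letters with absolute value in $\varphi(S)$. Therefore every word in the chain lies in the image of $\varphi$. Pulling the chain back by $\varphi^{-1}$, and using (a) to see that each relation (i)--(iv) pulls back to the same relation over $\tilde{\N}'$, gives $w_{row}(\tilde{L}) \equiv_{\tilde{\N}'} w_{row}(\tilde{M})$.

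The main obstacle is the careful verification of (c) and of the pullback of relation (iv). Relation (iv) replaces $|a|$ by $|a|'$, so I must ensure that $\varphi^{-1}(|a|')$ is defined; it is, because $\varphi$ is defined on both $\sigma$ and $\sigma'$ for every $\sigma \in S$. If one prefers to avoid these checks, the alternative is to note that Worley's proof of Theorem \ref{rect_word} uses only the total order and the marking on the alphabet, so it applies verbatim to $\tilde{\N}'$. Theorem \ref{inv_tilde} follows the same way: existence comes from transport, and uniqueness comes from pulling back two candidate normal tableaux.
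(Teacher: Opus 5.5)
Your proposal is correct and takes the same approach as the paper. The paper only remarks that Theorems \ref{inv_tilde} and \ref{rect_word_tilde} follow by translating Theorems \ref{inv} and \ref{rect_word} through order-preserving maps between $\N'$ and $\tilde{\N}'$, and your finite relabelling $\varphi$, with checks (a)--(d) and the pullback of the Knuth chain, is a careful execution of that remark. One harmless imprecision: because of the exceptional slides a slide can turn a $k'$ into $k$, so $\tilde{M}$ need not contain literally the same letters as $\tilde{L}$; its letters still lie in $S \cup S'$, where $\varphi$ is defined, so nothing breaks.
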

We can prove Theorems \ref{inv_tilde} and \ref{rect_word_tilde} by translating the Theorems \ref{inv} and \ref{rect_word} via some order-preserving maps from $\N'$ to $\tilde{\N}'$.
Then, a shifted tableau $\tilde{L} \in \QSST_{\tilde{\N}'}$ becomes the normal shifted tableau $P(w_{row}(\tilde{L}))$ by jeu de taquin, and we define $\Rect(\tilde{L}) = P(w_{row}(\tilde{L}))$.

We define the map $\phi_T$ from $\QSST$ to $\QSST_{\tilde{\N}'}$. For any $L \in \QSST$, the shifted tableau $\phi_T(L)$ is constructed as follows: let $\bm{n}=(n_1, \dots, n_r)$ be the content of $L$. (Note that $n_i \in \N$, not in $\tilde{\N}$). For each $k \in \N$, we label the entries $k'$ and $k$ in $L$ with subscript $1, 2, \dots, n_k$. These labels are assigned in increasing order, first to the  entries $k'$ top to bottom, and then to the entries $k$ from left to right. 
It is clear that for any $L \in \QSST$, we can recover $L$ by applying the labeling-off operation to $\phi_T(L)$. Thus, the restriction of $\phi_T$ to its image is invertible, and its inverse, denoted by $\phi_T^{-1}$, is the labeling-off map.

We define the map $\phi_w$ from the set of all words over $\N'$ to the set of all words over $\tilde{\N}$. For any word $w$ over $\N'$, the word $\phi_w(w)$ is constructed as follows:  let $\bm{n}=(n_1, \dots, n_r)$ be the content of $w$. For each $k\in \N$, we label the occurrences of $k'$ and $k$ in $w$ with subscript $1, 2, \dots, n_k$. These labels are assigned in increasing order, first to the  entries $k'$ right to left, and then to the entries $k$ from left to right.  Finally, remove the mark from the elements originating from $k'$.
For any $L \in \QSST$, we immediately have that $\phi_w (w_{row}(L)) = |w_{row}(\phi_T(L))|$ by the definitions.\\[-0.5em]

For example, for $
L=
~\begin{ytableau}
1' & 1 & 2' &2 \\
\none & 2' & 2 & 3 \\
\none & \none & 3
\end{ytableau}~$ with $w_{row}(L)=3 2' 2 3 1' 1 2' 2$, we have the following by the definition:
\begin{align*}
\phi_T(L)&=
~\begin{ytableau}
1'_1 & 1_2 & 2'_1 & 2_4\\
\none & 2'_2 & 2_3 & 3_2 \\
\none & \none & 3_1
\end{ytableau}~, \\
w_{row}(\phi_T(L)) &= 3_1 2'_2 2_3 3_2 1'_1 1_2 2'_1 2_4,\\
\phi_w(w_{row}(L)) &= 3_1 2_2 2_3 3_2 1_1 1_2 2_1 2_4= |w_{row}(\phi_T(L))|.
\end{align*}

We have the following proposition.
\begin{prop}\label{rect_phi}
Let $L \in \QSST$ have a skew shape.
Then, we have
$$|\Rect(\phi_T(L))| = |\phi_T(\Rect(L))|.$$
\end{prop}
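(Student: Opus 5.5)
Taking $|\cdot|$ of the two sides of Proposition \ref{rect_phi} removes all marks, so both sides are unmarked tableaux over $\tilde{\N}$ of the same normal shape. My plan is to show that jeu de taquin commutes with the standardization $\phi_T$ one elementary slide at a time, up to marks. Concretely, for $L\in\QSST$ and an inside corner $c$, let $L^\bullet$ be the result of one slide of $L$ into $c$. I will prove that the slide of $\phi_T(L)$ into $c$ moves the dot along the same path as the slide of $L$. I will then show that the result agrees with $\phi_T(L^\bullet)$ after forgetting marks. Iterating over the sequence of inside corners chosen in a jeu de taquin of $L$ then gives $|\Rect(\phi_T(L))|=|\phi_T(\Rect(L))|$. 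Here I use that $\Rect$ is independent of the order of slides, by Theorems \ref{inv}, \ref{rect_word} and their $\tilde{\N}'$ analogues, Theorems \ref{inv_tilde} and \ref{rect_word_tilde}.

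First I check that $\phi_T$ is order-compatible. If $a,b$ are entries of $L$ with $|a|\ne|b|$, their comparison is the same as that of their labelled versions. If $|a|=|b|=k$, the labelling rule (primed $k'$ top to bottom, then unprimed $k$ left to right) makes labels increase down columns among the $k'$ and along rows among the $k$. This is possible because $k'$ occupy a vertical strip by PST2 and $k$ a horizontal strip by PST3. Hence $\phi_T(L)\in\QSST_{\tilde{\N}'}$, and in $\phi_T(L)$ all entries are distinct as elements of $\tilde{\N}'$ (ignoring the mark). Next, I compare the choices made at an elementary slide: the dot with $a$ below and $b$ to the right. When $|a|\ne|b|$ the choices agree trivially. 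When $|a|=|b|=k$, the rule in $L$ says: move $b$ if $b\Leftarrow a$, i.e. $b=a=k'$ (a vertical pair of primes), and move $a$ if $a\leftarrow b$, i.e. $a=b=k$ or $a=k'<b=k$. I must check that the labelled entries $\tilde a,\tilde b$ compare in the same way. In the case $a=b=k'$, the label of $b$ (upper-right) is smaller than that of $a$ because $b$ lies in a higher row, so $\tilde b<\tilde a$ and $\tilde b$ moves. In the case $a=b=k$, the label of $a$ (lower-left) is smaller since it is further left, so $\tilde a$ moves. In the case $a=k'$, $b=k$, all primes are labelled before unprimed entries, so $\tilde a<\tilde b$. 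The case $a=k$, $b=k'$ is impossible by the column/row conditions.

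The main obstacle is that after a slide the labels carried along need not coincide with the canonical labelling $\phi_T(L^\bullet)$. In particular, the exceptional diagonal slides of Figure \ref{slide_fig2} change $k'$ into $k$: the entry $k'$ at $(i,i+1)$ becomes $k$, so a label $k_l$ assigned to a primed entry would now sit on an unprimed one. This is precisely why the statement takes $|\cdot|$, and also why $\phi_w$ removes marks. I will show that the set of positions holding $|k|$-entries, with their subscripts, still satisfies the labelling rule up to marks. The primed entries remain a vertical strip and the unprimed ones a horizontal strip, and a nonexceptional slide moves one entry by one box without changing its relative reading position among equal-valued entries. For the exceptional case I verify directly on the $2\times2$ configuration that the relabelled entries are in the canonical order. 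The $k'$ at $(i,i+1)$ has the smallest label among the three local $k$-type entries, it moves left to the diagonal, and the diagonal entry there ends up first in the reading order of its row. A cleaner alternative I would try first is to argue via words. We have $\phi_w(w_{row}(L))=|w_{row}(\phi_T(L))|$, and I would show that each Knuth relation (i)--(iv) on words over $\N'$ is transported by $\phi_w$ to a Knuth relation over $\tilde{\N}'$ on unmarked words, possibly the identity in case (iv). Then Theorem \ref{inv_tilde} identifies $P$ of both sides with the normal tableaux, giving the result by uniqueness.
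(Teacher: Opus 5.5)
Your main plan is a genuinely different route from the paper's, and it can be made to work.

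\textbf{How the paper argues.} It never opens up a slide. Lemma \ref{lem_phi} shows that $\phi_w$ sends each Knuth relation over $\N'$ to one over $\tilde{\N}'$. Types (i), (ii), (iii) are preserved, and (iv) becomes the (iii)-swap $k_{n+1}k_{n+2}\cdot\tilde u \leftrightarrow k_{n+2}k_{n+1}\cdot\tilde u$, so it is not the identity as you suggested. The proof then checks that deleting marks from a word whose letters have pairwise distinct bases preserves $\equiv_{\tilde{\N}'}$. It concludes from the chain $w_{row}(|\Rect(\phi_T(L))|)\equiv|w_{row}(\phi_T(L))|=\phi_w(w_{row}(L))\equiv\phi_w(w_{row}(\Rect(L)))=w_{row}(|\phi_T(\Rect(L))|)$ together with Theorem \ref{inv_tilde}. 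This is your ``cleaner alternative'', but your sketch omits the mark-deletion step, which is needed for the first link of the chain.

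\textbf{Comparison.} The word argument is completely insensitive to the slide rules, including the diagonal exceptions. Your slide-by-slide argument avoids Lemma \ref{lem_phi} and gives a stronger, positional statement: labelled entries travel exactly where the entries of $L$ go, with equal shapes after every slide. This is the tracking that underlies $\rho_L$ and $\bm{v}_L$ later. The price is that you must handle the exceptional slides and the labelling invariant by hand. You should also state explicitly that a tableau with pairwise distinct bases never triggers the exceptional rule and slides according to $|\cdot|$ alone. That is what makes it harmless to continue from the slid $\phi_T(L)$ rather than from $\phi_T(L^\bullet)$.

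\textbf{Local claims that need fixing.}
\begin{itemize}
\item For $|a|=|b|=k$ one has $b\Leftarrow a$ if and only if $b=k'$. So the pair $a=k$, $b=k'$ also moves $b$, and this case is not impossible. In $\QSST((4,2)/(2))$, put $1'$ at $(1,3)$ and $1$ at $(1,4),(2,2),(2,3)$, then slide into $(1,2)$. It is harmless, since every primed label is smaller than every unprimed one, so $\tilde b<\tilde a$ there as well.
\item In the exceptional case the dot paths are not literally equal. In $L$ the dot jumps from $(i,i)$ to $(i+1,i+1)$, while in $\phi_T(L)$ it passes through $(i,i+1)$; only the endpoint agrees.
\item Your check of the exceptional case is local and only describes the variant with $k$ at $(i+1,i+1)$. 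The entry that loses its mark is the $k'$ from $(i,i+1)$ when $(i+1,i+1)=k$, but the $k'$ from $(i+1,i+1)$ when $(i+1,i+1)=k'$. You need two global facts about it:
\begin{enumerate}
\item it is the lowest $k'$ in the whole current tableau, so it carries the largest primed label $p$;
\item after the move it is the leftmost unmarked $k$.
\end{enumerate}
Since the number of $k'$ drops to $p-1$, the canonical labelling then assigns it exactly $p$, and all other labels are unchanged.
\item Both facts follow from PST1--PST3. Entries in rows below $i$ (respectively below $i+1$, using PST2 in row $i+1$) are $\ge k$. Entries of rows above $i$ in columns $\le i+1$ are $\le k'$. ``Smallest label among the three local entries'' is not the property that matters.
\end{itemize}
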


To prove Proposition \ref{rect_phi}, we show the following lemma.
\begin{lem}\label{lem_phi}
Let $w^{(0)}, w^{(1)}$ be words over $\N'$.
Then, 
$$w^{(0)} \equiv w^{(1)} \Longrightarrow \phi_w(w^{(0)}) \equiv \phi_w(w^{(1)}).$$
\end{lem}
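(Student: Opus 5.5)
Since $\equiv$ is the transitive closure of the Knuth relations (i)--(iv), the plan is to check that whenever $w^{(0)}$ and $w^{(1)}$ differ by a single elementary relation, $\phi_w(w^{(0)})$ and $\phi_w(w^{(1)})$ are equal or differ by a single elementary relation over $\tilde{\N}'$. Two observations make this manageable.

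First, all letters of $\phi_w(w)$ are distinct and unmarked, because the occurrences of value $k$ receive the distinct subscripts $1,\dots,n_k$. For distinct unmarked letters $x,y$ we have $x \leftarrow y \iff x \Leftarrow y \iff x<y$, and $|x|\neq|y|$ always holds. So over $\tilde{\N}$, relation (iii) lets us swap any two distinct initial letters, and relations (i), (ii) become the classical Knuth moves $cab \sim acb$ and $bac\sim bca$ for $a<b<c$.

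Second, the label of an occurrence depends only on the relative positions of occurrences with the same absolute value. Hence transposing two adjacent letters $x,y$ with $|x|\neq|y|$ changes no labels; it only transposes the two labelled letters.

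\emph{Relations (i) and (ii).} Take (i), $u\cdot cab\cdot v \sim u\cdot acb\cdot v$ with $a\leftarrow b\Leftarrow c$. These conditions give $|a|\le|b|\le|c|$.
\begin{itemize}
\item We cannot have $|a|=|c|$. Otherwise all three letters have the same value $k$. Then $a\leftarrow b$ forces $(a,b)\in\{(k',k),(k,k)\}$, while $b\Leftarrow c$ forces $(b,c)\in\{(k',k),(k',k')\}$, and these are incompatible.
\item So the moved letters $c,a$ have different values, all labels are preserved, and the images are $\tilde u\,\tilde c\tilde a\tilde b\,\tilde v$ and $\tilde u\,\tilde a\tilde c\tilde b\,\tilde v$.
\item It remains to show $\tilde a<\tilde b<\tilde c$. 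This is automatic when the absolute values differ, so suppose they coincide.
  \begin{itemize}
  \item If $|a|=|b|$, then either $a=k'$, $b=k$, and primes get smaller labels than unprimed letters; or $a=b=k$ with $a$ to the left of $b$, and unprimed letters are labelled left to right.
  \item If $|b|=|c|$, then either $b=k'$, $c=k$; or $b=c=k'$ with $c$ to the left of $b$, and primes are labelled right to left, so $b$ gets the smaller label.
  \end{itemize}
\end{itemize}
Relation (ii) is handled by the identical case analysis with the roles of $\leftarrow$ and $\Leftarrow$ exchanged. Relation (iii) with $|a|\neq|b|$ preserves labels and maps directly to relation (iii) over $\tilde{\N}'$.

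\emph{Relation (iv) --- the main obstacle.} Here the two words differ by changing the mark of a letter, so the labels genuinely change and we must compute them. Let $a$ have value $k$, and let $w$ contain $p$ copies of $k'$.
\begin{itemize}
\item For $a=k$: in $kk\,w$ the $k'$ in $w$ get labels $1,\dots,p$ and the first two letters get $p+1,p+2$. In $kk'\,w$ the $k'$ in position $2$ gets $p+1$ and the $k$ in position $1$ gets $p+2$. All labels of $w$ are the same in both words.
\item For $a=k'$: the same computation gives $p+1,p+2$ in $k'k\,w$ and $p+2,p+1$ in $k'k'\,w$, again with $w$ unchanged.
\end{itemize}
In both cases the images are $k_{p+1}k_{p+2}\,\tilde w$ and $k_{p+2}k_{p+1}\,\tilde w$. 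These are related by relation (iii) over $\tilde{\N}'$, since the two initial letters are distinct.

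Chaining these single-step implications along a sequence of Knuth relations gives the lemma.
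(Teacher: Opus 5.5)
Your strategy matches the paper's: check each single Knuth relation, show that the labels are unchanged under (i)--(iii) and that the labelled letters satisfy $\tilde a<\tilde b<\tilde c$, and show that (iv) becomes an instance of (iii). Cases (i), (iii) and (iv) are correct as you wrote them.

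\textbf{The gap is in case (ii).} You dismiss it as ``the identical case analysis with the roles of $\leftarrow$ and $\Leftarrow$ exchanged,'' but the first step of that analysis, $|a|\neq|c|$, is false for (ii). When all three letters have the same value $k$, the condition $a\Leftarrow b\leftarrow c$ only forces $a=k'$ and $c=k$, and $b$ may be either $k'$ or $k$. For example, $1'1'1\sim 1'11'$ (with $b=a=1'$, $c=1$) and $11'1\sim 111'$ (with $a=1'$, $b=c=1$) are both instances of (ii). In each, the transposed letters $a,c$ have the same absolute value. Your second observation covers only transpositions with $|x|\neq|y|$, so as written it does not show that the labels survive the move.

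\textbf{How to repair it.} Sharpen the observation. The label of an occurrence of $k'$ is one plus the number of $k'$ to its right. The label of an occurrence of $k$ is the total number of $k'$, plus one, plus the number of $k$ to its left. Hence transposing two adjacent letters that are distinct as elements of $\N'$ preserves every label; they need not have distinct absolute values. In (ii) you have $a<c$, since $a<b\le c$ or $a\le b<c$. So $a\neq c$ and the labels are preserved. This is exactly how the paper argues: it uses $a\neq c$, not $|a|\neq|c|$.

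Your ordering checks then go through, including the new cases $(a,b,c)=(k',k',k)$ and $(k',k,k)$, where both of your sub-bullets apply simultaneously. For instance, $1'1'1\mapsto 1_21_11_3$ and $1'11'\mapsto 1_21_31_1$, which are related by (ii) with $1_1<1_2<1_3$.
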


\begin{proof}[Proof of Lemma \ref{lem_phi}]
It suffices to check only the case in which $w^{(0)}$ and $w^{(1)}$ are related by a single Knuth relation. 
\\

\noindent
\textbf{\underline{Case of (i).}} Suppose $w^{(0)}$ and $w^{(1)}$ are related by (i). Then we can assume that they are of the form $w^{(0)} = u \cdot cab \cdot  v$ and $w^{(1)} = u \cdot acb \cdot v$ for some words $u, v$ over $\N'$ and $a, b, c\in \N'$ with $a \leftarrow b \Leftarrow c$. By this condition, we have $a \ne c$ because $a \le b < c$ if $b$ is unmarked and $a < b \le c$ if $b$ is marked. Let $\alpha^{(0)}, \beta^{(0)}, \gamma^{(0)} \in \N$ (resp. $\alpha^{(1)}, \beta^{(1)}, \gamma^{(1)} \in \N$) be the subscripts labeled with the entries $a, b, c$ respectively in $\phi_w(w^{(0)})$ (resp. $\phi_w(w^{(1)})$). 
Let $\tilde{u}(w^{(0)}), \tilde{v}(w^{(0)})$ (resp. $\tilde{u}(w^{(1)}), \tilde{v}(w^{(1)})$) be the words over $\tilde{\N}$ originated from $u, v$ in $\phi_w(w^{(0)})$ (resp. $\phi_w(w^{(1)})$). 
It means that $\phi_w(w^{(0)}) = \tilde{u}^{(0)} \cdot |c|_{\gamma^{(0)}} |a|_{\alpha^{(0)}} |b|_{\beta^{(0)}} \cdot \tilde{v}^{(0)}$ and $\phi_w(w^{(1)}) = \tilde{u}^{(1)} \cdot |a|_{\alpha^{(1)}} |c|_{\gamma^{(1)}} |b|_{\beta^{(1)}} \cdot \tilde{v}^{(1)}$. Now, we prove that $\phi_w(w^{(1)}) = \tilde{u}^{(0)} \cdot |a|_{\alpha^{(0)}} |c|_{\gamma^{(0)}} |b|_{\beta^{(0)}} \cdot \tilde{v}^{(0)}$.

\begin{itemize}
\item
\underline{$\tilde{u}^{(0)} = \tilde{u}^{(1)}$.}
By the definition of $\phi_w$, the subscripts in $\tilde{u}^{(0)}$ (resp. $\tilde{u}^{(1)}$) are determined only by the number of each marked integers in $cab \cdot v$ (resp. $acb \cdot v$). When we count the numbers, there is no difference between the two cases: in $cab \cdot v$ and in $acb \cdot v$. So we have $\tilde{u}^{(0)} = \tilde{u}^{(1)}$.
 
\item
\underline{$\tilde{v}^{(0)} = \tilde{v}^{(1)}$.}
The subscripts labeled with $k$ in $\tilde{v}^{(0)}$ (resp. $\tilde{v}^{(1)}$) are determined only by the number of $k'$ in $w^{(0)}$ (resp. $w^{(1)}$) and the number of $k$ in $u \cdot cab$ (resp. $u \cdot acb$) for each $k \in \N$. When we count the numbers, there is no difference between the two cases: in $u \cdot cab$ and in $u \cdot acb$. So we have $\tilde{v}^{(0)} = \tilde{v}^{(1)}$.

\item
\underline{$\alpha^{(0)} = \alpha^{(1)}$.}
If $a$ is marked, $\alpha^{(0)}$ (resp. $\alpha^{(1)}$) is the number of element $a$ in $b \cdot v$ (resp. $cb \cdot v$) plus 1. By the condition $a \ne c$, we have that $\alpha^{(0)} = \alpha^{(1)}$. 
If $a$ is unmarked, $\alpha^{(0)}$ (resp. $\alpha^{(1)}$) is the sum of the number of element $a'$ in $w^{(0)}$ (resp. $w^{(1)}$), the number of element $a$ in $u \cdot c$ (resp. $u$) and $1$. Because $w^{(0)}$ and $w^{(1)}$ have the same components and $a \ne c$, we have $\alpha^{(0)} = \alpha^{(1)}$.

\item
\underline{$\beta^{(0)} = \beta^{(1)}$.}
If $b$ is marked, both $\beta^{(0)}$ and $\beta^{(1)}$ are the number of element $b$ in $v$ plus 1. So we have that $\beta^{(0)} = \beta^{(1)}$. 
If $b$ is unmarked, $\beta^{(0)}$ (resp. $\beta^{(1)}$) is the sum of the number of element $b'$ in $w^{(0)}$ (resp. $w^{(1)}$), the number of element $b$ in $u \cdot ca$ (resp. $u \cdot ac$) and $1$. Because $w^{(0)}$ and $w^{(1)}$, $u \cdot ca$ and $u \cdot ac$ have the same components respectively, so we have $\beta^{(0)} = \beta^{(1)}$.

\item
\underline{$\gamma^{(0)} = \gamma^{(1)}$.}
If $c$ is marked, $\gamma^{(0)}$ (resp. $\gamma^{(1)}$) is the number of element $c$ in $ab \cdot v$ (resp. $b \cdot v$) plus 1. By the condition $a \ne c$, we have that $\gamma^{(0)} = \gamma^{(1)}$. 
If $c$ is unmarked, $\gamma^{(0)}$ (resp. $\gamma^{(1)}$) is the sum of the number of element $c'$ in $w^{(0)}$ (resp. $w^{(1)}$), the number of element $c$ in $u$ (resp. $u \cdot a$) and $1$. Because $w^{(0)}$ and $w^{(1)}$ have the same components and $a \ne c$, we have $\gamma^{(0)} = \gamma^{(1)}$.

\end{itemize}
Now we have $\phi_w(w^{(0)}) = \tilde{u}^{(0)} \cdot |a|_{\alpha^{(0)}} |c|_{\gamma^{(0)}} |b|_{\beta^{(0)}} \cdot \tilde{v}^{(0)}$. 
If $a_{\alpha^{(0)}} < b_{\beta^{(0)}} < c_{\gamma^{(0)}}$, then we have that $\phi_w(w^{(1)})$ is related to $\phi_w(w^{(0)})$ by (i).
Now we show $a_{\alpha^{(0)}} < b_{\beta^{(0)}} < c_{\gamma^{(0)}}$.
If $a < b < c$, we have $a_{\alpha^{(0)}} < b_{\beta^{(0)}} < c_{\gamma^{(0)}}$ immediately. 
If $a = b < c$, then $a, b$ are unmarked because of the condition $a \leftarrow b$. So $\alpha^{(0)} < \beta^{(0)}$ because $a$ is left to $b$ in $w^{(0)}$. Then we have $a_{\alpha^{(0)}} < b_{\beta^{(0)}} < c_{\gamma^{(0)}}$.
If $a < b = c$, then $b, c$ are marked because of the condition $b \Leftarrow c$. So $\gamma^{(0)} < \beta^{(0)}$ because $b$ is right to $c$ in $w^{(0)}$. Then we have $a_{\alpha^{(0)}} < b_{\beta^{(0)}} < c_{\gamma^{(0)}}$.
Thus, we obtain the result of the case (i).
\\

\noindent
\textbf{\underline{Case of (ii).}} Suppose $w^{(0)}$ and $w^{(1)}$ are related by (ii). Then we can assume that they are of the form $w^{(0)} = u \cdot bac \cdot  v$ and $w^{(1)} = u \cdot bca \cdot v$ for some words $u, v$ over $\N'$ and $a, b, c\in \N'$ with $a \Leftarrow b \leftarrow c$. By this condition, we have $a \ne c$ because $a < b \le c$ if $b$ is unmarked and $a \le b < c$ if $b$ is marked. Let $\alpha^{(0)}, \beta^{(0)}, \gamma^{(0)} \in \N$ (resp. $\alpha^{(1)}, \beta^{(1)}, \gamma^{(1)} \in \N$) be the subscripts labeled with the entries $a, b, c$ respectively in $\phi_w(w^{(0)})$ (resp. $\phi_w(w^{(1)})$). 
Let $\tilde{u}^{(0)}, \tilde{v}^{(0)}$ (resp. $\tilde{u}^{(1)}, \tilde{v}^{(1)}$) be the words over $\tilde{\N}$ originated from $u, v$ in $\phi_w(w^{0})$ (resp. $\phi_w(w^{1})$). 
It means that $\phi_w(w^{(0)}) = \tilde{u}(w^{(0)}) \cdot |b|_{\beta^{(0)}} |a|_{\alpha^{(0)}} |c|_{\gamma^{(0)}} \cdot \tilde{v}^{(0)}$ and $\phi_w(w^{(1)}) = \tilde{u}(w^{(1)}) \cdot |b|_{\beta^{(1)}} |c|_{\gamma^{(1)}} |a|_{\alpha^{(1)}} \cdot \tilde{v}^{(1)}$. We can obtain that $\phi_w(w^{(1)}) = \tilde{u}^{(0)} |b|_{\beta^{(0)}} \cdot |c|_{\gamma^{(0)}} |a|_{\alpha^{(0)}} \cdot \tilde{v}^{(0)}$ and $a_{\alpha^{(0)}} < b_{\beta^{(0)}} < c_{\gamma^{(0)}}$ similarly to the case of (i). 
Then we have the conclusion that  $\phi_w(w^{(1)})$ is related to $\phi_w(w^{(0)})$ by (ii). 
\\

\noindent
\textbf{\underline{Case of (iii).}} Suppose $w^{(0)}$ and $w'$ are related by (iii). Then we can assume that they are of the form $w = ab \cdot  v$ and $w' = ba \cdot v$ for some words $v$ over $\N'$ and $a, b\in \N'$ with $|a| \ne |b|$. Let $\alpha^{(0)}, \beta^{(1)} \in \N$ (resp. $\alpha^{(1)}, \beta^{(1)} \in \N$) be the subscripts labeled with the entries $a, b$ respectively in $\phi_w(w^{(0)})$ (resp. $\phi_w(w^{(1)})$). 
Let $\tilde{v}^{(0)}$ (resp. $\tilde{v}^{(1)}$) be the words over $\tilde{\N}$ originated from $v$ in $\phi_w(w^{(0)})$ (resp. $\phi_w(w^{(1)})$). 
It means that $\phi_w(w^{(0)}) = |a|_{\alpha^{(0)}} |b|_{\beta^{(0)}} \cdot \tilde{v}^{(0)}$ and $\phi_w(w^{(1)}) = |b|_{\beta^{(1)}} |a|_{\alpha^{(1)}} \cdot \tilde{v}^{(1)}$. Now, we prove that $\phi_w(w^{(1)}) = |b|_{\beta^{(0)}} |a|_{\alpha^{(0)}} \cdot \tilde{v}^{(0)}$.

\begin{itemize}
\item
\underline{$\tilde{v}^{(0)} = \tilde{v}^{(1)}$.}
The subscripts labeled with $k$ in $\tilde{v}^{(0)}$ (resp. $\tilde{v}^{(1)}$) are determined only by the number of $k'$ in $w^{(0)}$ (resp. $w^{(1)}$) and the number of $k$ in $ab$ (resp. $ba$) for each $k \in \N$. Because $w^{(0)}$ and $w^{(1)}$, $ab$ and $ba$ have the same components respectively, so we have $\tilde{v}^{(0)} = \tilde{v}^{(1)}$.
 
\item
\underline{$\alpha^{(0)} = \alpha^{(1)}$.}
By the condition $|a| \ne |b|$, both $\alpha^{(0)}$ and $\alpha^{(1)}$ are the number of element $a$ in $v$ plus 1.  So we have that $\alpha^{(0)} = \alpha^{(1)}$. 

\item
\underline{$\beta^{(0)} = \beta^{(1)}$.}
By the condition $|a| \ne |b|$, both $\beta^{(0)}$ and $\beta^{(1)}$ are the number of element $b$ in $v$ plus 1. So we have $\beta^{(0)} = \beta^{(1)}$.

\end{itemize}
Then, we have that $\phi_w(w^{(1)}) = |b|_{\beta^{(0)}} |a|_{\alpha^{(0)}} \cdot \tilde{v}^{(0)}$.
Here, $|a|_{\alpha^{(0)}} \ne |b|_{\beta^{(0)}}$ because of $|a| \ne |b|$, we obtain the result of the case (iii).
\\

\noindent
\textbf{\underline{Case of (iv).}} 
Suppose $w^{(0)}$ and $w^{(1)}$ are related by (iv).  Then we can assume that they are of the form $w^{(0)} = aa \cdot u$ (resp. $a'a \cdot u$) and $w^{(1)} = aa' \cdot u$ (resp. $a'a' \cdot u$) for some words $u$ over $\N'$ and $a \in \N$. Let $n$ be the number of the entries $a'$ in $u$. Then, $\phi_w(w^{(0)})$ is of the form $\phi_w(w^{(0)}) = a_{n+1} a_{n+2} \cdot \tilde{u}$ and $\phi_w(w^{(1)})$ is of the form $\phi_w(w^{(1)}) = a_{n+2} a_{n+1} \cdot \tilde{u}$ by the definition of $\phi_w$ where $\tilde{u}$ originates from $u$. Therefore, by $|a_{n+1}| < |a_{n+2}|$, $\phi_w(w^{(0)})$ is related to $\phi_w(w^{(1)})$ by (iii). 
\end{proof}

Then, we prove Theorem \ref{rect_phi} by using Lemma \ref{lem_phi}.

\begin{proof}[Proof of Proposition \ref{rect_phi}]
For a word $w^{(0)}$ such that the set of all letters in $|w^{(0)}|$ is pairwise distinct, 
we have $|w^{(0)}| \equiv_{\tilde{\N}'} |w^{(1)}|$ for any word $w^{(1)}$ such that $w^{(0)} \equiv_{\tilde{\N}'} w^{(1)}$. This equivalence $|w^{(0)}| \equiv_{\tilde{\N}'} |w^{(1)}|$ is obtained by verifying the four types of Knuth relations (i)-(iv), respectively as follows:

\begin{enumerate}
\item[(i)]
The words $w^{(0)}$ and $w^{(1)}$ are of the form $w^{(0)} = u \cdot cab \cdot v$ and $w^{(1)} = u \cdot acb \cdot v$ (or vice-versa) with $a \leftarrow b \Leftarrow c$ and some words $u, v$ over $\N'$

$\Rightarrow$ 
The words $|w^{(0)}|$ and $|w^{(1)}|$ are of the form $|w^{(0)}| = |u| \cdot |c||a||b| \cdot |v|$ and $|w^{(1)}| = |u| \cdot |a||c||b| \cdot |v|$ (or vice-versa) with $|a| < |b| < |c|$ (because $\{|a|, |b|, |c|\}$ is pairwise distinct). 

\item[(ii)]
The words $w^{(0)}$ and $w^{(1)}$ are of the form $w^{(0)} = u \cdot bac \cdot v$ and $w^{(1)} = u \cdot bca \cdot v$ (or vice-versa) with $a \Leftarrow b \leftarrow c$ and some words $u, v$ over $\N'$

$\Rightarrow$
The words $|w^{(0)}|$ and $|w^{(1)}|$ are of the form $|w^{(0)}| = |u| \cdot |b||a||c| \cdot |v|$ and $|w^{(1)}| = |u| \cdot |b||c||a| \cdot |v|$ with $|a| < |b| < |c|$. 

\item[(iii)]
The words $w^{(0)}$ and $w^{(1)}$ are of the form $w^{(0)} = ab \cdot w$ and $w^{(1)} = ba \cdot w$ (or vice-versa) with $|a| \ne |b|$ and some word $w$ over $\N'$

$\Rightarrow$
The words $|w^{(0)}|$ and $|w^{(1)}|$ are of the form $|w^{(0)}| = |a||b| \cdot |w|$ and $|w^{(1)}| = |b||a| \cdot |w|$ (or vice-versa) with $||a|| = |a| \ne ||b|| = |b|$. 

\item[(iv)]
The words $w^{(0)}$ and $w^{(1)}$ are not related by (iv), because all letters in $|w^{(0)}|$ is pairwise distinct and then $w^{(0)}$ never be of the form $w^{(0)} = a|a| \cdot w$ or $w^{(0)} = a|a|' \cdot w$ for any $a \in \N'$ and a word $w$ over $\N'$.
\end{enumerate}
By the definition of $\phi_T$, the set of all letters in $w_{row}(\phi_T(L))$ is pairwise distinct. 
Then we have that
$$|w_{row}(\Rect(\phi_T(L)))| \equiv_{\tilde{\N}'} |w_{row}(\phi_T(L))|.$$
by
$$w_{row}(\Rect(\phi_T(L))) \equiv_{\tilde{\N}'} w_{row}(\phi_T(L)), $$
obtained by Theorem \ref{rect_word_tilde}.
Therefore, we have that 
\begin{align*}
w_{row}(|\Rect(\phi_T(L))|)
& = |w_{row}(\Rect(\phi_T(L)))|\\
& \equiv_{\tilde{\N}'} |w_{row}(\phi_T(L))|\\
& = w_{row}(|\phi_T(L)|). 
\end{align*}
By the definition of $\phi_T$ and $\phi_w$, we have 
$$\phi_w(w_{row}(L)) = w_{row}(|\phi_T(L)|).$$
Also, combining Theorem \ref{rect_word} and Lemma \ref{lem_phi}, we have
$$\phi_w(w_{row}(L)) \equiv_{\tilde{\N}'} \phi_w(w_{row}(\Rect(L))).$$
Then we have
\begin{align*}
w_{row}(|\Rect(\phi_T(L))|)
& \equiv_{\tilde{\N}'} w_{row}(|\phi_T(L)|)\\
& = \phi_w(w_{row}(L))\\
& \equiv_{\tilde{\N}'} \phi_w(w_{row}(\Rect(L)))\\
& = w_{row}(|\phi_T(\Rect(L))|).
\end{align*}
By Theorem \ref{inv_tilde}, we have
$|\Rect(\phi_T(L))| = |\phi_T(\Rect(L))|.$
\end{proof}

\begin{dfn}[Shifted arm and shifted body]\label{arm_body_def}
Let $\lambda/\mu$ be a skew strict partition. 
\begin{itemize}
\item The {\it shifted arm} of diagram $\lambda/\mu$ is the part of the diagram which consists of the boxes $(1, m + \mu_1 + 1), \dots, (1, \lambda_1)$ with $m = {\rm min}\{\lambda_2, \sum_{i \ge 2}(\lambda_i - \mu_i)\}$. We denote the shifted arm of $\lambda/\mu$ by $SA(\lambda/\mu)$. 
\item
The {\it shifted body} of $\lambda/\mu$ is the part of the diagram obtained by removing $SA(\lambda/\mu)$ from the diagram $\lambda/\mu$, and is denoted by $SB(\lambda/\mu)$.
\end{itemize}
Here, $\sum_{i \ge 2}(\lambda_i - \mu_i)$ is the number of boxes in the second row or below in the shifted diagram of $\lambda/\mu$.
\end{dfn}

\begin{ex}
For $\lambda = (6, 3)$ and $\mu = (2, 1)$, $m = {\rm min}\{\lambda_2, \sum_{i \ge 2}(\lambda_i - \mu_i)\} = {\rm min}\{3, 2\} = 2$ and then $SA(\lambda/\mu)$ consists of the boxes $\{(1, m + \mu_1 + 1), \dots, (1, \lambda_1)\} = \{(1, 5), (1, 6)\}$. The part of light gray boxes in the following figure is $SB(\lambda/\mu)$, and the part of white boxes is $SA(\lambda/\mu)$.
$$
\vcenter{\hbox{
\begin{tikzpicture}[baseline=(current bounding box.center)]
\node at (0,0){ 
~\begin{ytableau}
*(black!40){} & *(black!40){} & *(black!10){} & *(black!10){} & {} & {}\\
\none & *(black!40){} & *(black!10){} & *(black!10)
\end{ytableau}~};
\draw[line width=0.7pt] (1.5, -0.2) -- (2.5, -0.6) node[at={(3.3, -0.6)}]{$SA(\lambda/\mu)$};
\draw[line width=0.7pt] (0.2, -0.7) -- (0.8, -1.2) node[at={(1.6, -1.2)}]{$SB(\lambda/\mu)$};
\end{tikzpicture}}}
$$
\end{ex}

For any set $X$ and any tableau $T \in ST(\lambda/\mu, X)$, we define $SA(T)$ as the part of $T$ consisting of the boxes in $SA(\lambda/\mu)$ and entries in them. We call this the {\it shifted arm} of $T$. We also define $SB(T)$ similarly. Note that if $m + \mu_1 + 1 > \lambda_1$, $SA(\lambda/\mu)$ is empty.

\begin{lem}\label{skew_box_lem}
Let $L \in \QSST(\lambda/\mu)$, $\tilde{L}=(\tilde{l}_{ij}) \coloneq \phi_T(L)$, and $\nu = \shape(\Rect(\tilde{L}))$. Then, 
$\Rect(\tilde{L})$ has $SA(\tilde{L})$ in the right-most contiguous boxes of the first row. That is, with $m = {\rm min}\{\lambda_2, \sum_{i \ge 2}(\lambda_i - \mu_i)\}$, the entries $\tilde{l}_{1, m + \mu_1 + 1}, \dots, \tilde{l}_{1, \lambda_1}$ are placed in the boxes $(1, m + \mu_1 - \lambda_1 + \nu_1 + 1), \dots, (1, \nu_1)$ in $\Rect(\tilde{L})$, respectively.
\end{lem}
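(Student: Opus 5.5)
Since $\Rect(\tilde{L})$ does not depend on the order of slides (it equals $P(w_{row}(\tilde{L}))$ by Theorems \ref{inv_tilde} and \ref{rect_word_tilde}), I am free to choose a convenient rectification order and track the arm entries. The plan is as follows. First, rectify the part of $\tilde{L}$ lying in rows $2$ and below, together with the body part of row $1$, while keeping the arm boxes $SA(\tilde{L})$ as the tail of row $1$. Then check that the arm cells are never displaced downward in any slide. Because $\tilde{L}$ has pairwise distinct letters (the subscripts make every entry distinct), the relations $\leftarrow,\Leftarrow$ reduce to strict comparisons on unmarked entries apart from the marked/diagonal subtleties. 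The first row of $\tilde{L}$ is strictly increasing, so the arm entries are the largest entries of row $1$.

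In a slide, the dot moves through row $1$ from an inside corner, moving right only while the entry below is larger or missing. An arm entry $\tilde{l}_{1,j}$ with $j \ge m+\mu_1+1$ can be displaced downward only if the dot reaches column $j$ in row $1$ and moves down there. That would require a box $(2,j)$ to exist at that stage, i.e.\ the second row must extend at least to column $j$. The key count is this. The second row of any tableau obtained during the sliding process ends in a column at most $1+\min\{\lambda_2,\ \#\{\text{boxes in rows}\ge 2\}\}+(\text{current first-row offset})$. Before the dot enters row $1$ at the arm, the first row has been shifted left by the same amount. Hence, measured relative to the first row, the arm always sits strictly to the right of every box in row $2$. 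This is exactly why $m=\min\{\lambda_2,\sum_{i\ge2}(\lambda_i-\mu_i)\}$ appears. The bound $\lambda_2$ handles the case when row $2$ lies weakly inside $\lambda$ (the shape stays inside $SD(\lambda)$, as remarked after Theorem \ref{LRrule2}). The bound $\sum_{i\ge 2}(\lambda_i-\mu_i)$ handles the case when all lower boxes could be collected into row $2$.

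Concretely, I would induct on the number of slides. The invariant is that the arm entries occupy the last $\lambda_1-m-\mu_1$ boxes of row $1$, consecutively and in the original order, and that row $2$ ends at or before the column just left of the first arm box plus one diagonal shift. In each slide, the dot either ends before the arm or travels horizontally through the whole arm and exits at the end of row $1$, shifting every arm entry one step left. In both cases the invariant persists, since lower rows only gain boxes in columns reachable from below. At the end, the arm occupies the last boxes of row $1$ of $\nu$, namely columns $\nu_1-(\lambda_1-m-\mu_1)+1,\dots,\nu_1$, as claimed.

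The main obstacle is maintaining the column bound on row $2$ relative to the arm throughout the process, especially the exceptional diagonal slides (Figure \ref{slide_fig2}) and the shifted indentation. I would first handle the case where the slides are chosen to clear row $1$'s inside corners last, so that lower rows rectify first. In that case row $2$'s length is at most the number of boxes below row $1$, and also at most $\lambda_2$, which gives the bound directly.
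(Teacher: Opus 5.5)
Your outline follows the paper's route. Rectification is order-independent, you rectify rows $\ge 2$ first (your last paragraph), and the new second row then has length $\psi_2\le\min\{\lambda_2,\sum_{i\ge2}(\lambda_i-\mu_i)\}=m$. The gap is in the step you flag as the main obstacle: carrying the arm through the remaining slides. As written, that step does not go through.

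First, your invariant (``row 2 ends at or before the column just left of the first arm box plus one'') and your key count are off by one. They only give that the last box of row 2 is at or to the left of the column of the first arm box, and that is not enough. Suppose row 2 ends exactly at the column $c$ of the first arm box. A row-1 slide can move the first arm entry into $(1,c-1)$ and put the dot at $(1,c)$. It can then lift the entry of $(2,c)$ into $(1,c)$. That entry lands between two arm entries, or to the right of the arm if the arm has a single box (the right neighbour is then missing). So ``the arm always sits strictly to the right of every box in row 2'' does not follow from your bound. You need the strict inequality before every slide that starts in row 1. Also, entries never move down in a slide; the danger is an entry of row 2 moving up into row 1 at or beyond the arm.

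Second, your persistence argument (``lower rows only gain boxes in columns reachable from below'') misses the real issue. The right end of row 2 never moves right. But each row-1 slide whose dot runs through the arm shifts the arm one column left, while row 2 may stay put, so the gap shrinks. The missing ingredient is a count:
\begin{itemize}
\item After rows $\ge2$ are rectified, row 2 ends at column $\psi_2+1\le m+1$, and the first arm box is at column $m+\mu_1+1$.
\item Exactly $\mu_1$ slides remain, and each moves the arm left by at most one.
\item Hence before each of them the first arm box is at column $\ge m+2>\psi_2+1$. The dot cannot descend in any column the arm occupies at the start of that slide.
\item So the arm stays the final consecutive block of row 1, which gives the stated columns.
\end{itemize}
The paper gets the same extra column differently. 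It shows lower entries never reach columns $>\psi_2+1$, so the final boxes $(1,\psi_2+2),\dots,(1,\nu_1)$ hold the last original row-1 entries in order. A case split $\nu_1>\lambda_1-\mu_1$ versus $\nu_1=\lambda_1-\mu_1$ then shows the arm fits inside that block. Finally, the exceptional diagonal slides you worry about never occur, since all entries of $\phi_T(L)$ have distinct absolute values.
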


\begin{ex}
$$
\vcenter{\hbox{
\begin{tikzpicture}[baseline=(current bounding box.center)]
\node at (0,0){ 
~\begin{ytableau}
*(black!40){} & *(black!40){} & *(black!40){} & *(black!10)1_1 & *(black!10)2'_1 & *(black!10)3_1 & 4_3 & 4_4 \\
\none & *(black!40){} & *(black!10)2'_2 & *(black!10)4_2  \\
\none & \none & *(black!10)4_1
\end{ytableau}~};
\draw[line width=0.7pt] (1.7, 0.2) -- (2.2, -0.4) node[at={(2.8, -0.4)}]{$SA(\tilde{L})$};
\draw[line width=0.7pt] (0.2, -0.2) -- (0.8, -0.8) node[at={(1.4, -0.8)}]{$SB(\tilde{L})$};
\end{tikzpicture} }}
\stackrel{\text{jeu de taquin}}{\mapsto}
\vcenter{\hbox{
\begin{tikzpicture}[baseline=(current bounding box.center)]
\node at (0,0){ 
~\begin{ytableau}
*(black!10)1_1 & *(black!10)2'_1 & *(black!10)3_1 & *(black!10)4_2 & 4_3 & 4_4 \\
\none & *(black!10)2'_2 & *(black!10)4_1  
\end{ytableau}~};
\draw[line width=0.7pt] (1, -0.1) -- (1.3, -0.5) node[at={(1.9, -0.5)}]{$SA(\tilde{L})$};
\end{tikzpicture}. }}$$

\end{ex}

\begin{proof}[Proof of Lemma \ref{skew_box_lem}]
Consider the process of jeu de taquin which is carried out starting from the slide of the right-most entry of the bottom row of the smaller (deleted) diagram in each step.
$$
~\begin{ytableau}
*(black!40){} & *(black!40)\cdots & *(black!40)\cdots & *(black!40){} & {\ast} & \none[\cdots]\\
\none & *(black!40){} & *(black!40)\cdots & *(black!40){} & {\ast} & \none[\cdots]\\
\none & \none & *(black!40)\bullet & {\ast} & \none & \none[\cdots] \\
\none & \none & \none & {\ast} & \none & \none[\cdots] \\
\none & \none & \none & \none & \none[\vdots] 
\end{ytableau}~
\leadsto
\cdots 
\leadsto
~\begin{ytableau}
*(black!40){} & *(black!40)\cdots & *(black!40)\cdots & *(black!40){} & {\ast} & \none[\cdots]\\
\none & *(black!40){} & *(black!40)\cdots & *(black!40)\bullet & {\ast} & \none[\cdots]\\
\none & \none & {\ast} & {\ast} & \none[\cdots] \\
\none & \none & \none & \none & \none[\vdots] 
\end{ytableau}~
\leadsto 
\cdots
\leadsto
~\begin{ytableau}
*(black!40){} & *(black!40)\cdots & *(black!40)\cdots & *(black!40){} & {\ast} & \none[\cdots]\\
\none & *(black!40)\bullet & {\ast} & {\ast} & \none[\cdots]\\
\none & \none & {\ast} & \none[\cdots] \\
\none & \none & \none & \none[\vdots] 
\end{ytableau}~
\leadsto
\cdots.
$$
Now, suppose that we complete the slides from the second row and below. Let $\tilde{L}'$ be the resulting tableau after these slides are completed. By the definition of slide, the first row of the tableau is not changed throughout the process. Therefore, now the shape of $\tilde{L}'$ is $\psi/(\mu_1)$ for some normal shifted shape $\psi = (\psi_1, \dots, \psi_r)$ and the shape $(\mu_1)$ which consist of one row.
$$
\tilde{L'} = 
\ytableausetup{boxsize=1.7em}
\begin{tikzpicture}[baseline=(current bounding box.center)]
\node at (0,0){ 
~\begin{ytableau}
*(black!40){} & *(black!40)\cdots & *(black!40)\cdots & *(black!40){} & \tilde{l}_{1i} & \cdots & \tilde{l}_{1 \lambda_1}\\
\none & \ast & \ast & \ast & \none[\cdots]\\
\none & \none & \ast & \none & \none[\cdots]\\
\none & \none & \none & \none[\vdots] 
\end{ytableau}~
};
\node at (1.5, 1.6) {\small ($i = \mu_1 + 1$)};
\end{tikzpicture} 
\in \QSST_{\tilde{\N}}(\psi). 
$$
Since the entire sliding process is contained within the outer shape $\lambda$, the length of any row in $\tilde{L}'$ cannot exceed the length of the corresponding row in $\lambda$. In particular, for the second row, we must have $\psi_2 \le \lambda_2$.
Furthermore, the $\psi_2$ entries in the second row of $\tilde{L}'$ must have originated from the second row or below in the original tableau $\tilde{L}$. The total number of entries in these rows is $\sum_{i \ge 2}(\lambda_i -\mu_i)$. 
Therefore, we have $\psi_2 \le \sum_{i \ge 2}(\lambda_i - \mu_i)$.
Combining $\psi_2 \le \lambda_2$ and $\psi_2 \le \sum_{i \ge 2}(\lambda_i - \mu_i)$, we obtain that $\psi_2 \le {\rm min}\{\lambda_2, \sum_{i \ge 2}(\lambda_i - \mu_i)\}$. 
Let $m = {\rm min}\{\lambda_2, \sum_{i \ge 2}(\lambda_i - \mu_i)\}$.

Now, let us consider the remaining slides of jeu de taquin on $\tilde{L}$, which is equivalent to the process of jeu de taquin on $\tilde{L}'$. In this process, the entries in the first row slide to the left of the first row or remain in their position, and their relative horizontal order is preserved. 
Since the second row of $\tilde{L}'$ has $\psi_2$ boxes on the right of one space, there are no entries in the $(i, j)$-box with $i \ge  2$ and $j \ge \psi_2 + 2$ in $\tilde{L}'$. 
Since an elementary slide moves an entry only into an adjacent box to its left or above it, no entry from the second row or below can move into the $(1, j)$-box in $\tilde{L}'$ with $j \ge \psi_2 + 2$. 
Then, we obtain that the entries $\tilde{l}_{1, \psi_2 - \nu_1 + \lambda_1 + 2}, \dots, \tilde{l}_{1\lambda_1}$ are placed in the boxes $(1, \psi_2 + 2), \dots, (1, \nu_1)$, respectively, where $\nu = \shape(\Rect(\tilde{L}))$.

It follows that $\nu_1 \ge \lambda_1 - \mu_1$, since all entries from the first row of $\tilde{L}$ move into the first row of $\Rect(\tilde{L})$ by jeu de taquin.  
We now consider two cases: $\nu_1 > \lambda_1 - \mu_1$ and $\nu_1 = \lambda_1 - \mu_1$ separately.
If $\nu_1 > \lambda_1 - \mu_1$, then we have the following calculation: 
\begin{align*}
\psi_2 - \nu_1 + \lambda_1 + 2
&\le \psi_2 - \nu_1 + \lambda_1 + (\nu_1 - \lambda_1 + \mu_1) + 1 \\
&= \psi_2 + \mu_1 + 1\\
&\le m + \mu_1 + 1
\end{align*}
by $1 \le \nu_1 - \lambda_1 + \mu_1$ and $\psi_2 \le m$. Then, we have that the entries $\tilde{l}_{1, m + \mu_1 + 1}, \dots, \tilde{l}_{1\lambda_1}$ are placed in the boxes $(1, m + \mu_1 - \lambda_1 + \nu_1 + 1), \dots, (1, \nu_1)$ in $\Rect(\tilde{L})$, respectively. 
Next, we suppose that $\nu_1 = \lambda_1 - \mu_1$. In this case, the process of jeu de taquin on $\tilde{L}'$ was sliding the entries only leftward. Therefore, the entries $\tilde{l}_{1\mu_1}, \dots, \tilde{l}_{1\lambda_1}$ are placed in the boxes $(1, 1), \dots, (1, \nu_1)$ in $\Rect(\tilde{L})$, respectively. In particular, we have that the entries $\tilde{l}_{1, m + \mu_1 + 1}, \dots, \tilde{l}_{1 \lambda_1}$ are placed in the boxes $(1, m + \mu_1 - \lambda_1 + \nu_1 + 1), \dots, (1, \nu_1)$ in $\Rect(\tilde{L})$, respectively. 
Thus, we obtain Lemma \ref{skew_box_lem}. 
\end{proof}

\subsection{Proof of Theorem \ref{skew}}\label{proof_skew_subs}

Let $\bm{v} = (v_{ij}) \in ST(\lambda/\mu, \C), L \in \SSYT(\lambda/\mu)$. Let $\nu = \shape(\Rect(L))$. We define a new tableau $\bm{v}_L \in ST(\nu, \C)$ as follows. 
Let $\tilde{L} = (\tilde{l}_{ij}) \coloneq \phi_T(L)$. The definition of $\phi_T$ ensures that the set of all entries in $|\tilde{L}|$ is pairwise distinct. By the definition of jeu de taquin,  
the set of entries of $|\tilde{L}|$ and that of $|\Rect(\tilde{L})|$ are the same.  Therefore, there is a natural bijection $\rho_L : SD(\lambda/\mu) \to SD(\nu)$, 
which maps $(i, j) \in SD(\lambda/\mu)$ to the box in $\Rect(\tilde{L})$ where $|\tilde{l}_{ij}|$ or $|\tilde{l}_{ij}|'$ move into by jeu de taquin on $\tilde{L}$. This means that $|\Rect(\tilde{L})| = (|\tilde{l}_{\rho_L^{-1}(ij)}|)_{(i, j) \in SD(\nu)}$.
Using $\rho_L$, we define $\bm{v}_{L} = (v_{\rho_L^{-1}(ij)})_{(i, j) \in SD(\nu)}$. It means that we place the entry  $v_{i_0j_0}$ into the box at $(i_1, j_1)$ in $\bm{v}_{L}$ when $(i_0, j_0)$ in $\tilde{L}$ is mapped to $(i_1, j_1)$ in $\Rect(\tilde{L})$. 
Note that $\bm{v}_L$ has the same shape as $\Rect(L)$ and its entries are complex variables from the set $\{v_{ij}\}$, with each variable appearing exactly once.

\ytableausetup{boxsize=1.6em}
\begin{ex}
For $L =
~\begin{ytableau}
*(black!40){} & *(black!40){} & *(black!40){} & 1 & 2' & 3 & 4 & 4 \\
\none & *(black!40){} & 2' & 4  \\
\none & \none & 2'
\end{ytableau}~$ and 
$\bm{v} =
~\begin{ytableau}
*(black!40){} & *(black!40){} & *(black!40){} & v_{14} & v_{15} & v_{16} & v_{17} & v_{18} \\
\none & *(black!40){} & v_{23} & v_{24}  \\
\none & \none & v_{33}
\end{ytableau}~$,

\begin{align*}
\phi_T(L) 
=
~\begin{ytableau}
*(black!40){} & *(black!40){} & *(black!40){} & 1_1 & 2'_1 & 3_1 & 4_2 & 4_3 \\
\none & *(black!40){} & 2'_2 & 4_1  \\
\none & \none & 2'_3
\end{ytableau}~ \stackrel{\text{jeu de taquin}}{\mapsto}
~\begin{ytableau}
1_1 & 2'_1 & 2_3 & 3_1 & 4_2 & 4_3 \\
\none & 2'_2 & 4_1  
\end{ytableau}~
=\Rect(\phi_T(L)).\\
\end{align*}
Then,
\begin{align*}
\bm{v} 
=
~\begin{ytableau}
*(black!40){} & *(black!40){} & *(black!40){} & v_{14} & v_{15} & v_{16} & v_{17} & v_{18} \\
\none & *(black!40){} & v_{23} & v_{24}  \\
\none & \none & v_{33}
\end{ytableau}~\qquad \mapsto \qquad
~\begin{ytableau}
v_{14} & v_{15} & v_{16} & v_{23} & v_{17} & v_{18} \\
\none & v_{22} & v_{33} 
\end{ytableau}~
=\bm{v}_L. \qquad
\end{align*}

\end{ex}
We define $\displaystyle{U_\nu^Q(\bm{v})=\{\bm{v}_L\>|\>\shape(\Rect(L))=\nu\}}.$ 
By Lemma \ref{skew_box_lem} and the definition of $\bm{v}_L$, we have the following corollary.

\begin{cor}\label{skew_box_cor}
Let $L \in \QSST(\lambda/\mu)$ and $\bm{v} = (v_{ij}) \in ST(\lambda/\mu)$. Then, 
The tableau $\bm{v}_L$ has $SA(\bm{v})$ in the right-most contiguous boxes of the first row, i.e. with $m = {\rm min}\{\lambda_2, \sum_{i \ge 2}(\lambda_i - \mu_i)\}$, the entries $v_{1, m + \mu_1 + 1}, \dots, v_{1\lambda_1}$ are placed in the boxes $(1, m + \mu_1 - \lambda_1 + \nu_1 + 1), \dots, (1, \nu_1)$ in $\bm{v}_L$, respectively.
\end{cor}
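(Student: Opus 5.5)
This is essentially a direct translation of Lemma \ref{skew_box_lem} through the bijection $\rho_L$. I would proceed as follows. Fix $L \in \QSST(\lambda/\mu)$, put $\tilde{L} = \phi_T(L)$, and let $\nu = \shape(\Rect(L))$.

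First I note that $\shape(\Rect(\tilde{L})) = \shape(\Rect(L))$. Indeed, Proposition \ref{rect_phi} gives $|\Rect(\tilde{L})| = |\phi_T(\Rect(L))|$. The labeling map $\phi_T$ preserves shapes, and so does removing marks. Hence the $\nu$ appearing in Lemma \ref{skew_box_lem} coincides with the $\nu$ used in the definition of $\bm{v}_L$.

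Next, by the definition of $\rho_L$, the box $(1,j)$ of $SD(\lambda/\mu)$ is sent to the box of $\Rect(\tilde{L})$ into which the entry $|\tilde{l}_{1j}|$ (possibly with a mark, or with the mark changed by an exceptional slide) moves during jeu de taquin. Since the entries of $|\tilde{L}|$ are pairwise distinct, this box is well defined. Lemma \ref{skew_box_lem} states that for $m + \mu_1 + 1 \le j \le \lambda_1$, the entry $\tilde{l}_{1j}$ lands in the box $(1, j - \lambda_1 + \nu_1)$. So
$$\rho_L(1,j) = (1, j - \lambda_1 + \nu_1) \quad \text{for } m + \mu_1 + 1 \le j \le \lambda_1.$$

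Finally, since $\bm{v}_L = (v_{\rho_L^{-1}(ij)})$, the box $(1, j - \lambda_1 + \nu_1)$ of $\bm{v}_L$ contains $v_{1j}$. Letting $j$ run over $m+\mu_1+1, \dots, \lambda_1$ places $v_{1,m+\mu_1+1}, \dots, v_{1\lambda_1}$ in the boxes $(1, m+\mu_1-\lambda_1+\nu_1+1), \dots, (1,\nu_1)$, in that order, which is the claim. If $SA(\lambda/\mu)$ is empty, the statement is vacuous.

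The only point that needs care is the identification of entries in Lemma \ref{skew_box_lem} with \emph{boxes}, i.e.\ that following $\tilde{l}_{1j}$ is the same as following $|\tilde{l}_{1j}|$. The exceptional slide of Figure \ref{slide_fig2} may change a mark, but it never changes the underlying label in $\tilde{\N}$. Because those labels are distinct, tracking $|\tilde{l}_{1j}|$ is unambiguous, and this is exactly how $\rho_L$ is defined. Beyond this check I expect no real obstacle.
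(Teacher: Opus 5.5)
Your proposal is correct and is essentially the paper's argument: the paper obtains the corollary in one line from Lemma \ref{skew_box_lem} and the definition $\bm{v}_L=(v_{\rho_L^{-1}(ij)})$, and your use of Proposition \ref{rect_phi} to identify $\shape(\Rect(\tilde{L}))$ with $\shape(\Rect(L))$ just makes explicit a step the paper uses tacitly when it defines $\rho_L$ with target $SD(\nu)$. Your worry about exceptional slides is harmless but unnecessary: the entries of $|\tilde{L}|$ are pairwise distinct, so the exceptional slide never occurs during jeu de taquin on $\tilde{L}$, and each entry keeps its mark.
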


We now prove Theorem \ref{skew}.

\begin{proof}[Proof of Theorem \ref{skew}]
Let $\bm{u}_{\nu}(\bm{v})\in U_\nu(\bm{v})$ for $\nu \in\mathcal{G}^Q(\lambda/\mu)$. 
For $L \in \QSST(\lambda/\mu)$ such that $\shape(\Rect(L))$ is $\nu$, 
we have
\begin{align*}
\frac{1}{L^{\bm{v}}}
&=\frac{1}{|\phi_T^{-1}(\phi_T (L))|^{\bm{v}} }\\
&=\frac{1}{{|\phi_T^{-1}(\Rect(\phi_T (L)))|}^{\bm{v}_L} }\\
&=\frac{1}{{(\phi_T^{-1}|\Rect(\phi_T (L))|})^{\bm{v}_L} }\\
&=\frac{1}{|\Rect(L)|^{\bm{v}_{L}} }
\end{align*}
where the second and fourth equalities are obtained by definition of $\bm{v}_{L}$ and Proposition \ref{rect_phi} respectively. Also, there is only a difference between the constructions of $|\phi_T^{-1}(\Rect(\phi_T (L)))|$ and $\phi_T^{-1}|\Rect(\phi_T (L))|$ whether the subscripts are removed first or the marks are removed first from $\Rect (\phi_T (L))$. Therefore, $|\phi_T^{-1}(\Rect(\phi_T (L)))| = \phi_T^{-1}(|\Rect(\phi_T (L))|)$.

Let $m = \displaystyle{{\rm min}\{\lambda_2, \sum_{i \ge 2}(\lambda_i - \mu_i)\}}$.
From Lemma \ref{skew_box_lem} and the definition of $\bm{v}_L$, if $m + \mu_1 + 1 < \lambda_1$, the entries $|l_{1, m + \mu_1 + 1}|, \dots, |l_{1 \lambda_1}|$ are placed in the boxes $(1, m + \mu_1 - \lambda_1 + \nu_1 + 1), \dots, (1, \nu_1)$ in $|\Rect(L)|$ respectively. 

From Corollary \ref{skew_box_cor}, 
any $\bm{v}_L \in U_\nu(\bm{v})$ has $SA(\bm{v})$ in the right-most contiguous boxes in the first row. The entries in $SB(\bm{v})$ appear exactly once in the parts obtained by removing $SA(\bm{v})$ from $\bm{v}_L$. 
Therefore, $\bm{v}_L$ and $\bm{u}_\nu(\bm{v})$ can be mapped to each other by permuting variables in $SB(\bm{v})$. 
Thus, we have
$$\sum_{\Sym(SB(\bm{v}))} \frac{1}{{|\Rect(L)|}^{\bm{v}_L}}
=\sum_{\Sym(SB(\bm{v}))} \frac{1}{{|\Rect(L)|}^{{\bm{u}}_\nu(\bm{v})}}$$
for any $L \in \QSST(\lambda/\mu)$ with $\shape(\Rect(L))=\nu$.
Then, 
\begin{align*}
\sum_{\Sym(SB(\bm{v}))} \zeta_{\lambda/\mu}^Q(\bm{v})
&=\sum_{\Sym(SB(\bm{v}))} \sum_{L \in \QSST(\lambda/\mu)} \frac{1}{|L|^{\bm{v}}}\\
&=\sum_{\Sym(SB(\bm{v}))}\sum_{L \in \QSST(\lambda/\mu)} \frac{1}{{|\Rect(L)|}^{\bm{v}_L}}\\
&=\sum_{\Sym(SB(\bm{v}))}\sum_{L \in \QSST(\lambda/\mu)} \frac{1}{{|\Rect(L)|}^{\bm{u}_{\shape(\Rect(L))}(\bm{v})}}\\
&=\sum_{\Sym(SB(\bm{v}))}\sum_{\nu \in \mathcal{G}^Q(\lambda/\mu)}\sum_{M \in \QSST(\nu)} f_{\mu\nu}^\lambda \frac{1}{|M|^{\bm{u}_\nu(\bm{v})}}\\
&=\sum_{\Sym(SB(\bm{v}))}\sum_{\nu \in \mathcal{G}^Q(\lambda/\mu)} f_{\mu\nu}^\lambda \zeta_\nu^Q(\bm{u}_\nu(\bm{v}))
\end{align*}
where the fourth equality follows from the Theorem \ref{LRrule2}.
Thus, we obtain Theorem \ref{skew}.
\end{proof}

\begin{ex}
For $\lambda = (6,3,1), \mu = (2,1)$, and $\bm{v} \in ST(\lambda/\mu, \C)$, the following equality is derived from Theorem \ref{skew}. 
For simplicity, we replace all variables in $SB(\bm{v})$ with the same variables $v_0$ and replace the double subscripts of other variables $v_{ij}$ with single subscripts.

\ytableausetup{boxsize=1.4em}
\begin{align*}
&\zeta^Q_{\lambda/\mu}\left(\vcenter{\hbox{
~\begin{ytableau}
*(black!40){} & *(black!40){} & v_0 & v_0 & v_0 & v_1 \\
\none & *(black!40){} & v_0 & v_0 \\
\none & \none & v_0
\end{ytableau}~}}\right)\\
&=
\zeta^Q_{(4,2,1)}\left(\vcenter{\hbox{
~\begin{ytableau}
v_0 & v_0 & v_0 & v_1 \\
\none & v_0 & v_0 \\
\none & \none & v_0
\end{ytableau}~}}\right)
+
\zeta^Q_{(4,3)}\left(\vcenter{\hbox{
~\begin{ytableau}
v_0 & v_0 & v_0 & v_1 \\
\none & v_0 & v_0 & v_0
\end{ytableau}~}}\right)
+
2\ \zeta^Q_{(5,2)}\left(\vcenter{\hbox{
~\begin{ytableau}
v_0 & v_0 & v_0 & v_0 & v_1 \\
\none & v_0 & v_0 
\end{ytableau}~}}\right)
\end{align*}

since 
$f_{\mu(4,2,1)}^\lambda 
= f_{\mu(4,3)}^\lambda  = 1$, 
$f_{\mu(5,2)}^\lambda = 2$ and $f_{\mu\nu}^\lambda = 0$ for the other partitions $\nu$.

\end{ex}

\section*{Acknowledgement}
The author would like to express her sincere gratitude to her supervisor, Professor Yasuo Ohno, for introducing her to multiple zeta values and giving her his invaluable guidance on both the subject and the attitude towards research. The author is indebted to Professor Wataru Takeda for introducing her to Schur $P$-, $Q$-multiple zeta functions, motivating this research and giving helpful comments from the initial stage of this work. The author is also grateful to Professor Maki Nakasuji for giving her valuable advice. 
The author was partly supported by AIE-WISE Program for AI Electronics by Tohoku University.

\vspace{3mm}

\small
\noindent
HIKARI HANAKI \\
Mathematical Institute, Tohoku University, \\
6-3 Aramaki Aza-Aoba, Aoba-ku, Sendai, 980-8578, Japan.\\
e-mail: hanaki.hikari.s8@dc.tohoku.ac.jp

\end{document}